\documentclass[12pt,reqno]{amsart}

\usepackage{amsmath,amssymb}
\usepackage{amsthm}
\usepackage{thmtools}
\usepackage{mathtools}
\usepackage{dsfont}
\usepackage{color}
\usepackage{enumitem}
\usepackage{hyperref}
\usepackage[inner=1.0in,outer=1.0in,bottom=1.0in, top=1.0in]{geometry}

\usepackage{comment}
\usepackage{cases}


\newcommand{\Z}{{\mathbb{Z}}}
\newcommand{\Q}{{\mathbb{Q}}}
\newcommand{\R}{{\mathbb{R}}}
\newcommand{\C}{{\mathbb{C}}}
\newcommand{\HH}{{\mathbb{H}}}

\def\PSL{{\rm PSL}}

\DeclareMathOperator{\Tr}{Tr}

\DeclareMathOperator{\re}{Re}
\DeclareMathOperator{\im}{Im}

\newcommand\be{\begin{equation}}
	\newcommand\ee{\end{equation}}
\newcommand\bee{\begin{equation*}}
	\newcommand\eee{\end{equation*}}
\newcommand\ben{\begin{enumerate}}
	\newcommand\een{\end{enumerate}}
\newcommand{\defeq}{\vcentcolon=}
\def\SL{{\rm SL}}
\def\GL{{\rm GL}}
\newcommand{\floor}[1]{\left\lfloor #1 \right\rfloor}

\newcommand{\ceil}[1]{\left\lceil #1 \right\rceil}

\renewcommand{\(}{\left(}
\renewcommand{\)}{\right)}
\newcommand{\la}{\left|}
\newcommand{\ra}{\right|}

\renewcommand{\sl}{\big|}

\newcommand{\Mod}[1]{\ (\mathrm{mod}\ #1)}
\newcommand{\pr}{\mathrm{pr}}
\newcommand{\ep}{\varepsilon}

\newtheorem{theorem}{Theorem}

\newtheorem{lemma}[theorem]{Lemma}
\newtheorem{corollary}[theorem]{Corollary}
\newtheorem{proposition}[theorem]{Proposition}

\theoremstyle{remark}
\newtheorem*{remark}{Remark}

\numberwithin{equation}{section}
\numberwithin{theorem}{section}
\numberwithin{lemma}{section}
\numberwithin{proposition}{section} 
\numberwithin{example}{section}
\numberwithin{definition}{section}
\numberwithin{corollary}{section}

\mathtoolsset{showonlyrefs}

\makeatletter
\@namedef{subjclassname@2020}{\textup{2020} Mathematics Subject Classification}
\makeatother

\usepackage[foot]{amsaddr}

\author{Oscar E. Gonz\'alez}

\author{Qihang Sun}

\address{Department of Mathematics, University of Illinois, Urbana, Illinois, United States. }
\email{oscareg2@illinois.edu, qihangs2@illinois.edu}

\title{Effective estimates for traces of singular moduli}

\date{April 6, 2023. }

\begin{document}

	\begin{abstract}
		Traces of singular moduli can be approximated by exponential sums of quadratic irrationals. Recently Andersen and Duke used theory of Maass forms to estimate generalized twisted traces with power-saving error bounds. We establish an asymptotic formula with effective error bounds for such traces. Our methods depend on an explicit bound for sums of Kloosterman sums on $\Gamma_0(4)$. 
	\end{abstract}
 
 \keywords{Traces of singular moduli; Kohnen's plus space; Kloosterman sums;}
 
 \subjclass[2020]{11F37, 11F72, 11L05}

 \maketitle

	\section{Introduction}

	Define the modular $j$-function by
	\begin{equation}
		j(z) \defeq q^{-1} \prod\limits_{n=1}^\infty (1-q^n)^{-24}\Big(1+240\sum\limits_{n=1}^\infty \sum\limits_{m\mid n}m^3 q^n\Big)^3
		= \sum_{n=-1}^\infty c(n)q^n, \qquad q=e^{2\pi i z}.
	\end{equation}
	A problem related to the $j$-function is estimating the traces of singular moduli, which are the traces of $j$ at quadratic irrationals. 
	For a given discriminant $d<0$, let $K$ be the corresponding quadratic field over $\Q$ and let $h(d)$ be its class number. A 
	positive definite integral
	quadratic form is denoted as $Q(x,y)\defeq ax^2+bxy+cy^2=[a,b,c]$ for some $a,b,c\in \Z$. 
	A CM point is the associated root of
	$Q(x,1)$ and is given by 
	\bee
	z_Q=\frac{-b+i\sqrt{|d|}}{2a}\in \HH,
	\eee
	where $\HH$ is the upper-half complex plane, $(a,b,c)=1$ and $d=b^2-4ac<0$. We also denote 
	\be
	z_d = \begin{cases}
		\frac i2\sqrt{|d|} &\quad \text{ if } d\equiv 0\Mod 4,\\
		-\frac12+\frac i2\sqrt{|d|} &\quad \text{ if } d\equiv 1\Mod 4.
	\end{cases}
	\ee
	Define $j_m$ as the unique modular function in 
	$\C[j]$
	of the form 
	$j_m = q^{-m} + O(q)$. 
	It is known that the singular moduli $j_1(z_d)$ are algebraic integers of degree $h(d)$ and that the algebraic trace $\Tr j_1(z_d)$ is an integer.
	Duke \cite{DukeUniformDistrCM} confirmed the following conjecture of Bruinier, Jenkins and Ono \cite{BJOHilbertcls06}:
	\be
	\label{eq:limit}
	\lim_{d\rightarrow -\infty}\frac1{h(d)}\left(\Tr j_1(z_d)-\sum_{z_Q\in\mathcal R(1)}e(-z_Q)\right)=-24,
	\ee
	where $\mathcal R(Y)$ is the rectangle
	\bee
	\mathcal R(Y)=\left\{z=x+iy\in\HH: -\tfrac12\leq x<\tfrac12
	\text{ and }
	y>Y\right\}. 
	\eee
	
	Andersen and Duke \cite{ADasymptotic} generalized this result to sums twisted by genus characters. If we factor a negative discriminant $D=dd'$ as product of discriminants where $d$ is fundamental, then we
	can define a character by
	\begin{equation*}
		\chi_d(Q)=\left\{\begin{array}{ll}
			(\frac dn)& \text{if }(a,b,c,d)=1,\ Q(x,y)=n \text{ for some } x, y\in \Z, \text{ and }(d,n)=1,\\
			0& \text{if }  (a,b,c,d)>1.
		\end{array}
		\right. 
	\end{equation*}
	The twisted trace of $j_m$ is defined by
	\bee
	\Tr_d j_m(z_D) = \sum_{z_Q\in \mathcal F} \chi_d(Q) \omega_Q^{-1} j_m(z_Q),
	\eee
	where 
	\bee
	\mathcal F = \left\{ z\in \HH : -\tfrac 12 \leq \re(z) \leq 0 \text{ and } |z|\geq 1 \right\} \cup \left\{ z\in \HH : 0 < \re(z) < \tfrac 12 \text{ and } |z| > 1 \right\}
	\eee
	is the usual fundamental domain for the action of $\PSL_2(\Z)$ on $\HH$
	and 
	$Q$ runs over all positive definite integral binary quadratic forms of discriminant $d$ with $z_Q\in \mathcal F$.
	Here $\omega_Q=1$ unless $Q$ is $\PSL_2(\Z)$-equivalent to $[a,0,a]$ or $[a,a,a]$, in which case it equals $2$ or $3$, respectively. Let $\delta_1=1$ and $\delta_d=0$ when $d\neq 1$. 
	Andersen and Duke proved the following theorem
	which 
	improves and generalizes \eqref{eq:limit}.
	\begin{theorem}[{\cite[Theorem~2.2]{ADasymptotic}}]
		\label{ADThm2}
		For each negative discriminant $D$, let $d$ be any fundamental discriminant dividing $D$ and let $m\geq 1$.
		Then for $0<Y\ll\frac1m$ we have
		\begin{multline}
			\label{eq:ADThm2}
			\Tr_d j_m(z_D) - \sum_{z_Q\in\mathcal{R}(Y)}\chi_d(Q)(e(-mz_Q)	-e(-m\overline{z_Q}))\\
			=-24\delta_d\sigma_1(m)h(D)
			+O\left(m|D|^{\frac13}Y^{\frac12}
			\left(Y^{-\frac16}+|D|^{\frac{5}{36}}
			m^{\frac13(1+\theta)}\right)(m|D|/Y)^\varepsilon\right),
		\end{multline}
		where $\theta \in [0, \frac 7{64}]$ is
		an admissible exponent toward the Ramanujan conjecture for Maass cusp forms of integral weight.
	\end{theorem}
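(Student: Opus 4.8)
The plan is to realize the trace as a Fourier coefficient of a half-integral weight Poincar\'e series on $\Gamma_0(4)$ and to extract the three pieces of \eqref{eq:ADThm2} from its Fourier expansion. First, write $j_m=F_m-24\sigma_1(m)$, where $F_m$ is the Niebur--Poincar\'e series of weight $0$ and index $m$ (so $F_m=q^{-m}+24\sigma_1(m)+O(q)$); then $\Tr_d j_m(z_D)=\Tr_d F_m(z_D)-24\delta_d\sigma_1(m)h(D)$, which already isolates the displayed constant. For the remaining ``Poincar\'e trace'' $\Tr_d F_m(z_D)$, the Katok--Sarnak / Kohnen theta correspondence identifies the associated twisted Weyl sum $\sum_{z_Q}\chi_d(Q)\omega_Q^{-1}\bigl(e(-mz_Q)-e(-m\overline{z_Q})\bigr)$, up to an elementary normalising factor, with the $|D|$th Fourier coefficient of a weight-$\tfrac12$ Maass--Poincar\'e series $\mathcal P_{m^2}(z,s)$ in Kohnen's plus space on $\Gamma_0(4)$, evaluated at the harmonic value of the spectral parameter and built from a seed with principal part $q^{-m^2}$ (the square being forced by the Shimura correspondence). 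This is where the delicate input sits: it converts the only conditionally convergent unfolding of the weight-$0$ Poincar\'e series over Heegner points into a well-understood half-integral weight object.

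Next, unfold $\mathcal P_{m^2}$. Up to a bounded Kronecker-delta contribution, its $|D|$th coefficient is
\[
\sum_{\substack{c>0\\ 4\mid c}}\frac{\mathcal K(m^2,|D|;c)}{c}\,I_{1/2}\!\left(\frac{4\pi m\sqrt{|D|}}{c}\right),
\]
where $\mathcal K(m^2,|D|;c)$ is the weight-$\tfrac12$ Kloosterman sum with the theta-multiplier on $\Gamma_0(4)$. After Kohnen's plus-space projection, Sali\'e's evaluation of such sums, the substitution $c=4a$, and the identity $I_{1/2}(x)=\sqrt{2/(\pi x)}\,\sinh x$, the $c=4a$ term becomes a fixed constant times
\[
\sum_{\substack{b\Mod{2a}\\ b^2\equiv D\Mod{4a}}}\chi_d\!\left(\Bigl[a,\,b,\,\tfrac{b^2-D}{4a}\Bigr]\right)e\!\left(\tfrac{mb}{2a}\right)\sinh\!\left(\tfrac{\pi m\sqrt{|D|}}{a}\right).
\]

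Now recognise the geometry: for $Q=\bigl[a,b,\tfrac{b^2-D}{4a}\bigr]$ of discriminant $D$ one has $z_Q=\tfrac{-b+i\sqrt{|D|}}{2a}$ and the elementary identity $e(-mz_Q)-e(-m\overline{z_Q})=2\,e\!\left(\tfrac{mb}{2a}\right)\sinh\!\left(\tfrac{\pi m\sqrt{|D|}}{a}\right)$, so the displayed $a$-term equals $\sum_Q\chi_d(Q)\bigl(e(-mz_Q)-e(-m\overline{z_Q})\bigr)$ over the forms of discriminant $D$ with leading coefficient $a$. Summing over $a=1,2,\dots$ in this order (the series is only conditionally convergent, which is why the Kloosterman-sum cancellation below is needed) and choosing the residue system $-a<b\le a$ so that $\re(z_Q)\in[-\tfrac12,\tfrac12)$, the range $a<\sqrt{|D|}/(2Y)$, equivalently $\im(z_Q)>Y$, reproduces exactly $\sum_{z_Q\in\mathcal R(Y)}\chi_d(Q)\bigl(e(-mz_Q)-e(-m\overline{z_Q})\bigr)$. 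Hence $\Tr_d j_m(z_D)$ minus this sum equals $-24\delta_d\sigma_1(m)h(D)$ plus the tail $\sum_{a\ge\sqrt{|D|}/(2Y)}(\cdots)$ (and a bounded term), and it remains to bound that tail.

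The tail is $\sum_{c>2\sqrt{|D|}/Y,\ 4\mid c}\tfrac{\mathcal K(m^2,|D|;c)}{c}I_{1/2}(4\pi m\sqrt{|D|}/c)$; on this range $4\pi m\sqrt{|D|}/c<2\pi mY\ll 1$ since $Y\ll 1/m$, so $I_{1/2}(x)\asymp\sqrt x$ and the tail is $\ll(m\sqrt{|D|})^{1/2}\sum_{c>2\sqrt{|D|}/Y}\bigl|\mathcal K(m^2,|D|;c)\bigr|\,c^{-3/2}$. One then feeds in an \emph{effective} bound for $\sum_{c\le X}\mathcal K(m^2,|D|;c)/c$ — Weil's bound with explicitly tracked divisor factors on short ranges, and the Kuznetsov trace formula on long ranges — and resums against the Bessel weight by partial summation, optimising the split point; balancing the two regimes produces the two summands $Y^{-1/6}$ and $|D|^{5/36}m^{(1+\theta)/3}$ in \eqref{eq:ADThm2}. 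The exponent $\theta$ enters on the long range through the exceptional spectrum: the plus-space forms Shimura-lift to weight-$0$ Maass cusp forms, whose small Laplace eigenvalues contribute a power of $m^2|D|$ with exponent governed by $\theta$ (the admissibility of $\theta=\tfrac7{64}$, by Kim--Sarnak, being itself effective). The main obstacle — and the technical core of the paper — is making this estimate for sums of Kloosterman sums on $\Gamma_0(4)$ genuinely effective: explicit constants, uniform in $m$, $|D|$ and $Y$. The standard Kuznetsov-based bounds carry only ineffective implied constants, so one must set up an effective Kuznetsov formula on $\Gamma_0(4)$ and explicitly control the Eisenstein/continuous and the exceptional contributions; that is where the real work lies.
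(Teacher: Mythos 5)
You should first note what you are comparing against: this paper does not prove Theorem~\ref{ADThm2} at all --- it is quoted from Andersen and Duke \cite{ADasymptotic} --- so the only internal yardstick is the reduction the paper sets up for its effective analogue, namely Lemma~\ref{lem:TrWeyl} (taken from \cite[(3.3), Lemma~3.2]{ADasymptotic}), Kohnen's identity \eqref{eq:weylKloosterman}, and a bound for sums of plus-space Kloosterman sums. Your sketch follows essentially that same path, which is also Andersen--Duke's actual argument: an exact identity expressing $\Tr_d j_m(z_D)+24\delta_d\sigma_1(m)h(D)$ minus the truncated geometric sum over $\mathcal R(Y)$ as the tail $\sum_{4\mid c\geq 2|D|^{1/2}/Y}T_m(d,d';c)\sinh(4\pi m|D|^{1/2}/c)$, followed by cancellation in sums of weight-$\tfrac12$ Kloosterman sums on $\Gamma_0(4)$, with $\theta$ entering through the Shimura lift of the exceptional spectrum. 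Your identification of the $c=4a$ term with the forms of leading coefficient $a$, and of the cutoff $a<|D|^{1/2}/(2Y)$ with $\im z_Q>Y$, is exactly the content of that lemma.

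There are, however, two concrete gaps. First, bookkeeping: by Kohnen's identity the Weyl sum is the divisor sum $T_m(d,d';c)=\sum_{n\mid(m,c/4)}(\tfrac dn)\sqrt{2n/c}\,S_{\frac12}^{+}(d',\tfrac{m^2}{n^2}d,\tfrac cn)$, not a single Kloosterman sum of index $(m^2,|D|)$; the genus character and the sum over $n\mid m$ must be carried through (they affect the $m$-dependence of the error), and your intermediate step that bounds the tail by $\sum_{c}|\mathcal K|\,c^{-3/2}$ termwise diverges under Weil's bound (the terms are only $O(c^{-1+\varepsilon})$), so the partial summation against $\sum_{c\le x}S_{\frac12}^{+}(\cdot,\cdot,c)/c$ that you mention in passing is not a refinement but the entire mechanism. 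Second, and more seriously for this particular statement: what is being proved is the quantitative error term, and your sketch stops at ``feed in a bound and balance.'' The derivation of the exponents $Y^{-1/6}$ and $|D|^{5/36}m^{\frac13(1+\theta)}$ --- i.e.\ the Kuznetsov-formula estimate for partial sums of these plus-space Kloosterman sums, with the exceptional-eigenvalue contribution quantified by $\theta$ after Shimura lifting --- is the substance of Andersen--Duke's proof and is asserted rather than carried out. Note also that effectivity is \emph{not} required for Theorem~\ref{ADThm2} (its implied constant is allowed to be ineffective); making the Kloosterman-sum bound explicit is the point of Theorem~\ref{SumOfPlusKlSum} and Theorem~\ref{Thm2} of this paper, so your closing paragraph locates the difficulty of the quoted theorem in the wrong place.
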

	For any $\delta > 0$ we have
	$\tau(c) \ll_{\delta} c^{\delta}$, where 
	$\tau(c)$ is the number of divisors of $c$. Let 
	$\ell(\delta)$  be a constant such that for all $c\in\mathbb{N}$ we  have
	\be
	\label{eq:ell}
	\tau(c) \leq \ell(\delta) c^{\delta}.
	\ee 
	For numerical values we can take $\ell(\frac14)=8.447$ and $\ell(\frac 15)=28.117$ \cite[(6)]{NicolasCompositeNumbers}. When $n<0$, for simplicity we write $\tau(n)\defeq \tau(|n|)$ among the paper. 
	
	In Theorem~\ref{Thm2} 
	we 
	provide an effective bound for the error term
	in \eqref{eq:ADThm2}.
	Our methods rely on the estimate \eqref{SumOfPlusKlSum}
	for the sums of plus space Kloosterman sums defined at \eqref{eq:S+12}.  
	\begin{theorem}\label{Thm2}
		Let $D$, $d$ and $m$ be as in Theorem~\ref{ADThm2}, and let 
		$\ell(\delta)$ be as in \eqref{eq:ell}.
		Then we have
		\begin{multline}
			\la \Tr_d j_m(z_D)+24\delta_d\sigma_1(m)h(D)-\sum_{z_Q\in\mathcal{R}(Y)}\chi_d(Q)(e(-mz_Q)-e(-m\overline{z_Q}))\ra\\
			\leq  |D|^{\frac {13}{12}+\frac\delta2} m^{\frac 32}\tau(D)\tau(m)
			Y^{\frac13-\delta}
			\zeta^2(1+\delta)\ell(\delta)\log \frac{2|D|^{\frac12}}Y |\log \delta|\cdot\left\{\begin{array}{ll}
			    106954 & d<0, \\
			    24957m & d>0,
			\end{array}\right.
		\end{multline}
		for any $\delta\in(0,\frac14]$ and $0<Y\leq \frac1{2\pi m}$. 
	\end{theorem}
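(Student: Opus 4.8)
The plan is to return to the argument behind Theorem~\ref{ADThm2} at the point just before Andersen and Duke invoke the spectral theory of Maass forms. One could hope to keep their exact secondary term $-24\delta_d\sigma_1(m)h(D)$ and merely re-estimate their error term effectively, but that error is produced by a spectral bound carrying the ineffective exponent $\theta$; instead, at the point in question the quantity we must control is already written as an absolutely convergent sum of Kloosterman sums against Bessel-type weights, and the task is to bound that sum with explicit constants, using \eqref{SumOfPlusKlSum} in place of the spectral input.

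\emph{Step 1: reduction to a Kloosterman--Bessel series.} First I would re-establish, with all constants named, the identity exhibiting
\[
\mathcal{E}(Y)\defeq\Tr_d j_m(z_D)+24\delta_d\sigma_1(m)h(D)-\sum_{z_Q\in\mathcal{R}(Y)}\chi_d(Q)\bigl(e(-mz_Q)-e(-m\overline{z_Q})\bigr)
\]
as such a sum. Representing $j_m$ through its regularized weight-$0$ Poincar\'e series and unfolding the $\PSL_2(\Z)$-sum against the $\Gamma_\infty$-classes of positive definite integral binary quadratic forms $[a,b,c]$ of discriminant $D$: the forms with leading coefficient $a<\sqrt{|D|}/(2Y)$, equivalently $\im z_Q>Y$, reproduce exactly the rectangle sum; the regularization contributes $-24\delta_d\sigma_1(m)h(D)$ through Zagier's weight-$\tfrac32$ Eisenstein series; and the remaining forms, once the $\chi_d$-twisted Gauss sum $\sum_{b}\chi_d([a,b,c])\,e(mb/(2a))$ over $b\pmod{2a}$ with $b^2\equiv D\pmod{4a}$ is evaluated, collect into
\[
\mathcal{E}(Y)=\kappa\sum_{\substack{c>0\\4\mid c}}\frac{S^{+}(m,|D|;c)}{c}\,\mathcal{B}_c(Y),
\]
where $S^{+}$ is the plus-space Kloosterman sum of \eqref{eq:S+12} (its arguments normalized as there, the relevant index being of size $m^{2}$), $\kappa$ is an explicit fixed power of $|D|$ times a fixed power of $m$, and $\mathcal{B}_c(Y)$ is an explicit weight built from a Bessel function of order $\tfrac12$ of argument $\asymp m|D|^{1/2}/c$, together with lower-order contributions from $e(-m\overline{z_Q})$ and from the remaining cusp of $\Gamma_0(4)$. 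Evaluating the twisted Gauss sum in terms of $S^{+}$ is where the cases $d<0$ and $d>0$ separate --- in broad terms the underlying modular form lies in Kohnen's plus space of weight $\tfrac32$ in one case and of weight $\tfrac12$ in the other, and the attendant change of normalization is responsible for the extra factor $m$ in the latter; the hypothesis $Y\le\frac1{2\pi m}$ enters both to keep $e^{2\pi mY}\le e$ on the line $\im z=Y$ and, more essentially, to force the lower cutoff $c_0\defeq\sqrt{|D|}/(2Y)$ to satisfy $c_0\ge\pi m\sqrt{|D|}$, hence to lie past the transition point $c\asymp m\sqrt{|D|}$ of the Bessel function, so that the entire $c$-sum sits in the small-argument regime.

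\emph{Step 2: explicit Bessel estimates and partial summation.} Since the weight is half-integral, $\mathcal{B}_c(Y)$ is elementary: $I_{1/2}(t)=\sqrt{2/(\pi t)}\,\sinh t$ and $I_{-1/2}(t)=\sqrt{2/(\pi t)}\,\cosh t$, so $\mathcal{B}_c(Y)$ is a combination of $\sinh,\cosh,\sin,\cos$ and powers of $c$. Using $0\le e^{-t}\sinh t\le\min(1,t)$, $|\sin t|\le\min(1,|t|)$ and similar inequalities, and restricting to $c\ge c_0$, I would produce a piecewise-monotone majorant $|\mathcal{B}_c(Y)|\le\Phi(c)$ with $\Phi(c)\ll m^{1/2}|D|^{1/4}c^{-1/2}$ and controlled total variation $\sum_{4\mid c}|\Phi(c+4)-\Phi(c)|$. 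Expanding $\mathcal{E}(Y)$ by Abel summation against the partial sums $\sum_{c\le X,\,4\mid c}S^{+}(m,|D|;c)/c$, to which \eqref{SumOfPlusKlSum} applies with an explicit constant, the $X^{1/6}$-type shape of that estimate makes the tail from $c_0$ to $\infty$ converge and bounds it by $\ll c_0^{-1/3}$ times the value of the right-hand side of \eqref{SumOfPlusKlSum} at $X=c_0$. Since $c_0^{-1/3}=2^{1/3}Y^{1/3}|D|^{-1/6}$, this yields the factor $Y^{1/3}$ (and, after absorbing the $\delta$ of \eqref{SumOfPlusKlSum}, $Y^{1/3-\delta}$) and the exponent $\tfrac{13}{12}$ of $|D|$, namely $|D|^{1/3}$ from this partial summation times the $|D|^{3/4}$-size of $S^{+}$; likewise $m^{3/2}$ comes from the $m$-size of $S^{+}$ (its plus-space index being of order $m^2$), the $d>0$ case carrying a further factor $m$ absorbed into the constant $24957m$. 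The logarithm $\log(2|D|^{1/2}/Y)$ appears from a tail estimate $\sum_{c\ge c_0}\tau(c)c^{-2}\ll c_0^{-1}\log c_0$ needed for one lower-order piece of $\mathcal{B}_c(Y)$. Finally, collecting the elementary sums that occur --- $\sum_c\tau(c)c^{-1-\delta}=\zeta(1+\delta)^2$, $\sum_{c\le X}\tau(c)c^{\delta-1/2}\le(\tfrac12+\delta)^{-1}\ell(\delta)X^{1/2+\delta}$ from \eqref{eq:ell}, the divisor factor $\tau(D)\tau(m)$ from the divisor-size of $S^{+}$, and a factor $|\log\delta|$ from a dyadic decomposition in the $\delta$-aspect --- and absorbing the remaining absolute constants into $106954$ and $24957m$ gives the stated inequality, valid for $0<\delta\le\tfrac14$ and $0<Y\le\tfrac1{2\pi m}$.

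The step I expect to be the main obstacle is Step~1: a fully explicit re-derivation of the Andersen--Duke identity, with named constants controlling the difference between $j_m$ and its naive Poincar\'e series, the regularization at low-lying CM points, the Kohnen plus-space projection, and the precise passage from the $\chi_d$-twisted Gauss sums to the normalized sums $S^{+}(m,|D|;c)$; together with making the Bessel estimates of Step~2 uniform with good constants all the way down to $c\approx c_0$, which when $Y$ is near $\tfrac1{2\pi m}$ sits close to the transition range $c\asymp m\sqrt{|D|}$, where the small-argument series for $I_{1/2}$ and $I_{-1/2}$ has not yet completely dominated and a tailored inequality is required.
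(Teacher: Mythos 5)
Your plan is essentially the paper's proof: the quantity to be bounded is exactly the tail $\sum_{4\mid c\geq 2|D|^{1/2}/Y}T_m(d,d';c)\sinh(4\pi m|D|^{1/2}/c)$ by the exact identity of Lemma~\ref{lem:TrWeyl} (quoted from Andersen--Duke), which Kohnen's identity \eqref{eq:weylKloosterman} turns into plus-space Kloosterman sums, and the hypothesis $Y\leq\frac{1}{2\pi m}$ places the whole sum in the small-argument regime $\sinh y=\sqrt{\pi y/2}\,I_{1/2}(y)\ll y$, so that Abel summation against the explicit bound of Theorem~\ref{SumOfPlusKlSum} produces precisely the stated factors $Y^{\frac13-\delta}$, $|D|^{\frac{13}{12}+\frac\delta2}$, $m^{\frac32}$ and $\log\frac{2|D|^{1/2}}{Y}$. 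The one substantive correction is that your Step~1, which you single out as the main obstacle, requires no new work or effectivization --- the reduction is an exact identity cited from Andersen--Duke and Kohnen, the Weyl sum is the divisor sum $\sum_{n\mid(m,c/4)}(\tfrac dn)\sqrt{2n/c}\,S^{+}_{1/2}(d',m^{2}d/n^{2},c/n)$ rather than a single $S^{+}(m,|D|;c)$, and the extra factor $m$ in the case $d>0$ arises from swapping arguments via \eqref{eq:KloostermanRel} into the asymmetric bound $m^{\frac34}|n|^{\frac14}$ of Theorem~\ref{SumOfPlusKlSum}, not from a weight-$\tfrac32$ versus weight-$\tfrac12$ plus-space normalization (likewise the $\log$ and $|\log\delta|$ factors are simply inherited from that theorem, not from separate tail or dyadic arguments).
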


	We also have the following result, which allows for easier direct comparison with 
	\eqref{eq:limit} and \cite[Thm. 2.1]{ADasymptotic}.
	\begin{theorem}\label{Thm1}
		Let $D$, $d$ and $m$ be as in Theorem~\ref{ADThm2}, and let 
		$\ell(\delta)$ be as in \eqref{eq:ell}. Then 
		\begin{multline}
			\la \Tr_d j_m(z_D)+24\delta_d\sigma_1(m)h(D)-\sum_{z_Q\in\mathcal{R}(\frac1m)}\chi_d(Q)e(-mz_Q)\ra\\
			\leq 
			|D|^{\frac{13}{12}+\frac\delta2} m^{\frac76+\delta} \tau(D) \tau(m)
			\zeta^2(1+\delta)\ell(\delta)\log(2m|D|^{\frac12}) |\log \delta|\cdot \left\{\begin{array}{ll}
			     8.5\times 10^6& d<0, \\
			     2\times 10^6m& d>0,
			\end{array}\right.
		\end{multline}
		for any $\delta\in(0,\frac14]$. 
	\end{theorem}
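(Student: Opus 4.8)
The plan is to \emph{deduce Theorem~\ref{Thm1} from Theorem~\ref{Thm2}} by specializing the truncation height and then converting the exponential sum over $\mathcal R\bigl(\tfrac1{2\pi m}\bigr)$ into the one over $\mathcal R\bigl(\tfrac1m\bigr)$. First I would apply Theorem~\ref{Thm2} with the largest admissible value $Y=\tfrac1{2\pi m}$. Then $Y^{\frac13-\delta}=(2\pi)^{\delta-\frac13}m^{\delta-\frac13}$, so $m^{\frac32}Y^{\frac13-\delta}=(2\pi)^{\delta-\frac13}m^{\frac76+\delta}$, and since $m|D|^{\frac12}\ge1$ we have $\log\tfrac{2|D|^{1/2}}{Y}=\log\bigl(4\pi m|D|^{\frac12}\bigr)\le\bigl(1+\tfrac{\log2\pi}{\log2}\bigr)\log\bigl(2m|D|^{\frac12}\bigr)$. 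Absorbing $(2\pi)^{\delta-\frac13}$ and $1+\tfrac{\log2\pi}{\log2}$ into the numerical constant, Theorem~\ref{Thm2} already produces the right side of Theorem~\ref{Thm1}, except that $\sum_{z_Q\in\mathcal R(1/m)}\chi_d(Q)e(-mz_Q)$ is replaced by $\sum_{z_Q\in\mathcal R(1/(2\pi m))}\chi_d(Q)\bigl(e(-mz_Q)-e(-m\overline{z_Q})\bigr)$ and the constant is still the smaller $106954$ (resp.\ $24957m$). So everything reduces to estimating the difference of those two sums.

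Write $Y=\tfrac1{2\pi m}$, so $\mathcal R(\tfrac1m)\subset\mathcal R(Y)$ and
\begin{multline*}
\sum_{z_Q\in\mathcal R(Y)}\chi_d(Q)\bigl(e(-mz_Q)-e(-m\overline{z_Q})\bigr)-\sum_{z_Q\in\mathcal R(1/m)}\chi_d(Q)e(-mz_Q)\\
=\sum_{\substack{z_Q\in\mathcal R(Y)\\ \im z_Q\le 1/m}}\chi_d(Q)e(-mz_Q)\ -\ \sum_{z_Q\in\mathcal R(Y)}\chi_d(Q)e(-m\overline{z_Q}).
\end{multline*}
On the strip $Y<\im z_Q\le\tfrac1m$ one has $|e(-mz_Q)|=e^{2\pi m\im z_Q}\le e^{2\pi}$, and always $|e(-m\overline{z_Q})|=e^{-2\pi m\im z_Q}\le e^{-1}$ with rapid decay in $\im z_Q$. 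Both sums are then controlled by an explicit count of CM points: a primitive form $[a,b,c]$ of discriminant $D$ with $z_Q\in\mathcal R(Y)$ satisfies $\im z_Q=\tfrac{\sqrt{|D|}}{2a}>Y$, hence $a\le\pi m|D|^{\frac12}$, and for each such $a$ there are at most $\rho_D(a)\defeq\#\{b\bmod 2a: b^2\equiv D\Mod{4a}\}$ choices of $b$ with $-a<b\le a$. Summing $\rho_D(a)$ over $a\le\pi m|D|^{\frac12}$ by the standard estimate $\sum_{a\le X}\rho_D(a)\ll X\tau(D)\log(2X)$ shows that each of the two sums is $O\bigl(m|D|^{\frac12}\tau(D)\log(2m|D|^{\frac12})\bigr)$, which — since $\tau(m),\zeta^2(1+\delta),\ell(\delta),|\log\delta|\ge1$ and $\tfrac{13}{12}>\tfrac12$, $\tfrac76>1$ — is dominated by the right side of Theorem~\ref{Thm1} with room to spare. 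Combining with the triangle inequality and the output of Theorem~\ref{Thm2} completes the deduction.

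The main obstacle is the \emph{explicit} (not merely asymptotic) bookkeeping: one must fix a concrete inequality of the form $\sum_{a\le X}\rho_D(a)\le(\text{explicit})\cdot X\tau(D)\log(2X)$, and then carefully combine the loss from $Y=\tfrac1{2\pi m}$, the loss $1+\tfrac{\log2\pi}{\log2}$ from replacing $\log(4\pi m|D|^{\frac12})$ by $\log(2m|D|^{\frac12})$, and the two truncation sums, so that the final constant remains below $8.5\times10^6$ (resp.\ $2\times10^6m$). The analytic content is entirely inherited from Theorem~\ref{Thm2}; what remains is elementary but must be tracked numerically.
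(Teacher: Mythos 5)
Your route is genuinely different from the paper's. The paper does not deduce Theorem~\ref{Thm1} from Theorem~\ref{Thm2} at all: it returns to the first identity of Lemma~\ref{lem:TrWeyl} with the cut at $c=2m|D|^{1/2}$ (i.e.\ $Y=\tfrac1m$), re-runs the $\sinh$-tail estimate with the Bessel bound $I_{1/2}(y)\le \frac{I_{1/2}(2\pi)}{\sqrt{2\pi}}y^{1/2}$ valid for $y\le 2\pi$ (this is exactly where the larger constants $9\times10^6$ and $2.1\times10^6m$ of \eqref{eq:weylgx2} come from), and controls the extra exponentially damped sum $\sum_{4|c<2m|D|^{1/2}}T_m(d,d';c)\exp(-4\pi m|D|^{1/2}/c)$ by partial summation against $s(t)$ — i.e.\ it stays entirely on the Kloosterman-sum side and never counts CM points. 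You instead specialize Theorem~\ref{Thm2} at $Y=\tfrac1{2\pi m}$ and pay for the change of truncation and the removal of the $e(-m\overline{z_Q})$ terms by a lattice-point count in $\mathcal R(1/(2\pi m))$. The reduction steps you carry out are correct: $Y=\tfrac1{2\pi m}$ is admissible, $m^{3/2}Y^{1/3-\delta}=(2\pi)^{\delta-1/3}m^{7/6+\delta}$, the logarithm comparison loses only a small absolute factor, the algebraic decomposition of the difference of the two exponential sums is right, the pointwise bounds $e^{2\pi}$ on the strip and $e^{-1}$ on $\mathcal R(Y)$ are right, and so is the parametrization $a<\pi m|D|^{1/2}$ with at most $\rho_D(a)$ admissible $b$. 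If completed, your route would in fact yield a smaller constant than the paper's (the part inherited from Theorem~\ref{Thm2} is only of size a few times $10^5$, resp.\ $10^5m$), at the cost of a geometric counting input the paper never needs.

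That counting input is the one genuine missing piece: the ``standard estimate'' $\sum_{a\le X}\rho_D(a)\le C\,X\,\tau(D)\log(2X)$ with an explicit $C$ is nowhere in the paper and, in explicit form, is not quite off-the-shelf. The clean identity $\rho_D(a)=\sum_{k\mid a}\left(\tfrac{D}{k}\right)$ holds for fundamental $D$, but here $D=dd'$ is typically non-fundamental, and when $a$ shares a large square factor with $D$ the local counts $\rho_D(p^k)$ can be as large as about $p^{\lfloor v_p(D)/2\rfloor}$; one must sum over the square divisors of $D$ (picking up an $L(1,\chi)$-type factor, which is where your $\tau(D)\log(2X)$ should come from) and keep the constants explicit, since the whole point of the theorem is effectivity. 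The good news is that your target has enormous slack — the CM-count term only needs to stay below roughly $10^9\,m|D|^{1/2}\tau(D)\log(2m|D|^{1/2})$ once the factors $\tau(m),\zeta^2(1+\delta),\ell(\delta),|\log\delta|\ge1$ are used, so any constant $C$ up to about $10^5$ suffices — hence the lemma is elementary and the bookkeeping will close; but as written the argument is incomplete until you state and prove that inequality with a concrete constant and then assemble the final numerics.
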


 \begin{remark}
One may observe that the exponents of $m$ and $|D|$ in Theorem~\ref{Thm2} and Theorem~\ref{Thm1} are larger than those in Theorem~\ref{ADThm2} and \cite[Thm. 2.1]{ADasymptotic}, respectively. This is expected since our method is different and aims to obtain explicit constants. 

To be precise, since $h(D)>c_{\ep}|D|^{\frac12-\ep}$ \cite{Siegel1935} and $\limsup \frac{\sigma_1(m)}{m\log\log m}=e^\gamma$, when $d=1$ our bound $|D|^{\frac{13}{12}+\ep}m^{\frac32+\ep}$ grows faster than that term, while Andersen and Duke's result $|D|^{\frac{17}{36}+\ep}m^{\frac 56+\frac\theta 3+\ep}$ is practical. When $d\neq 1$, the sum on $e(-mz_Q)$ grows exponentially and any polynomial bound is sufficient.  However, the benefit of our method is the explicit choice of $Y$ in Corollary~\ref{Coro3}. 
\end{remark}

	By choosing $Y$ small enough that the error term 
	in \eqref{eq:ADThm2}
	tends to zero,
	Andersen and Duke
	obtain the following corollary:
	\begin{corollary}[\cite{ADasymptotic}, Corollary~2.3]
		Let $D$, $d$ and $m$ be as in Theorem~\ref{ADThm2} and let $Y = Cm^{-A} |D|^{-B}$ , where $A > 3$, $B > 1$, and $C > 0$ are constants. Then $\Tr_d j_m (z_D )$ is the nearest integer to
		\[-24\delta_d\sigma_1(m)h(D)+\sum_{z_Q\in\mathcal{R}(Y)}\chi_d(Q)(e(-mz_Q)-e(-m\overline{z_Q}))\]
		provided $m|D|$ is sufficiently large compared to $C$. 
	\end{corollary}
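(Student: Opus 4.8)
The plan is to read this off from Theorem~\ref{ADThm2}: for the prescribed choice of $Y$, the error term there decays to zero as $m|D|\to\infty$, and since $\Tr_d j_m(z_D)$ is a rational integer the corollary follows at once. Write
\[
X \defeq -24\delta_d\sigma_1(m)h(D)+\sum_{z_Q\in\mathcal{R}(Y)}\chi_d(Q)\bigl(e(-mz_Q)-e(-m\overline{z_Q})\bigr)
\]
for the displayed main term. Since $\Tr_d j_m(z_D)\in\Z$ (classical; compare the integrality of $\Tr j_1(z_d)$ recalled above), the assertion that $\Tr_d j_m(z_D)$ is the nearest integer to $X$ is equivalent to the bound $\bigl|\Tr_d j_m(z_D)-X\bigr|<\tfrac12$, and this is what I would prove.

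First I would verify the hypothesis $0<Y\ll 1/m$ of Theorem~\ref{ADThm2}: since $A>1$ we have $Y\le Cm^{-A}$, so $Y\le 1/m$ once $m^{A-1}\ge C$, which holds as soon as $m|D|$ is large relative to $C$. Substituting $Y=Cm^{-A}|D|^{-B}$ into the error bound and setting aside the factor $(m|D|/Y)^\varepsilon=C^{-\varepsilon}m^{(1+A)\varepsilon}|D|^{(1+B)\varepsilon}$, the error term becomes a sum of
\[
C^{1/3}\,m^{\,1-\frac{A}{3}}\,|D|^{\,\frac13-\frac{B}{3}}
\qquad\text{and}\qquad
C^{1/2}\,m^{\,1+\frac{1+\theta}{3}-\frac{A}{2}}\,|D|^{\,\frac{17}{36}-\frac{B}{2}}.
\]
For the first summand, $A>3$ gives $1-\tfrac{A}{3}<0$ and $B>1$ gives $\tfrac13-\tfrac{B}{3}<0$; for the second, $\theta\le\tfrac{7}{64}$ gives $1+\tfrac{1+\theta}{3}<\tfrac32<\tfrac{A}{2}$ and $\tfrac{17}{36}<\tfrac12<\tfrac{B}{2}$. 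Hence every exponent of $m$ and $|D|$ is strictly negative, and after choosing $\varepsilon$ small enough that the $(m|D|/Y)^\varepsilon$ factor cannot overturn this, the whole error term is $O\bigl((m|D|)^{-\eta}\bigr)$ for some $\eta=\eta(A,B,C,\theta)>0$.

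It then follows that there is a threshold $T=T(A,B,C,\theta)$ with the property that $m|D|\ge T$ forces the error term in \eqref{eq:ADThm2} below $\tfrac12$ in absolute value, hence $\bigl|\Tr_d j_m(z_D)-X\bigr|<\tfrac12$; as $\Tr_d j_m(z_D)$ is an integer it is the (unique) nearest integer to $X$. The one place requiring care --- the main obstacle --- is the exponent bookkeeping in the second display: one must confirm that the hypothesis $A>3$, strictly stronger than the $A>1$ needed for the $\mathcal{R}(Y)$-condition alone, is precisely what absorbs the growth $m^{1+(1+\theta)/3}$ produced by the $|D|^{5/36}m^{(1+\theta)/3}$ term of the error bound; everything else is routine.
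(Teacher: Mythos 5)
Your proposal is correct and follows exactly the route the paper indicates (and Andersen--Duke use): substitute $Y=Cm^{-A}|D|^{-B}$ into the error term of Theorem~\ref{ADThm2}, check via the exponent bookkeeping that $A>3$, $B>1$ make it $o(1)$ (indeed $O_C((m|D|)^{-\eta})$ after taking $\varepsilon$ small), and conclude from the integrality of $\Tr_d j_m(z_D)$ once the error drops below $\tfrac12$. The paper itself offers no further detail, so there is nothing to add beyond your verification.
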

	Choosing $\delta=\frac15$ and $\tau(n)\leq \ell(\frac15)n^{\frac 15}$
	in Theorem~\ref{Thm2}, we can obtain a similar result with 
	explicit constants $A$, $B$, and $C$ from rough estimates.
	\begin{corollary}\label{Coro3}
		Let $D$, $d$ and $m$ be as in Theorem~\ref{ADThm2}. Then $\Tr_d j_m (z_D )$ is the nearest integer to
		\bee
		-24\delta_d\sigma_1(m)h(D)+\sum_{z_Q\in\mathcal{R}(Y)}\chi_d(Q)(e(-mz_Q)-e(-m\overline{z_Q}))
		\eee
		provided that
		\bee
		Y<10^{-100} |D|^{-11}\cdot \left\{
		\begin{array}{lr}
		     m^{-13}& d<0, \\
		     m^{-21}& d>0.
		\end{array}
		\right.
		\eee
	\end{corollary}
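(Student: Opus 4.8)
The plan is to derive Corollary~\ref{Coro3} directly from Theorem~\ref{Thm2} by specializing $\delta = \tfrac15$, bounding every factor on the right-hand side by a crude power of $m$ and $|D|$, and then checking that the condition on $Y$ forces the total error to be strictly less than $\tfrac12$. Once the error beats $\tfrac12$, the integrality of $\Tr_d j_m(z_D)$ (which follows from the classical fact that the algebraic trace is an integer, together with the observation that $24\delta_d\sigma_1(m)h(D)$ is an integer) immediately yields that the stated expression rounds to $\Tr_d j_m(z_D)$, exactly as in \cite[Corollary~2.3]{ADasymptotic}.

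Concretely, I would first substitute $\delta=\tfrac15$ into the bound of Theorem~\ref{Thm2}. This replaces $\zeta^2(1+\delta)$, $\ell(\delta)$, and $|\log\delta|$ by the absolute constants $\zeta^2(\tfrac65)$, $\ell(\tfrac15)=28.117$, and $\log 5$; it turns $|D|^{\frac{13}{12}+\frac\delta2}$ into $|D|^{\frac{13}{12}+\frac1{10}} = |D|^{\frac{71}{60}}$, and $Y^{\frac13-\delta}$ into $Y^{\frac2{15}}$; and the $m$-exponent becomes $m^{3/2}$ (times the extra $m$ in the $d>0$ case). Next I would absorb the divisor functions: by \eqref{eq:ell} with $\delta=\tfrac15$ we have $\tau(D)\leq \ell(\tfrac15)|D|^{1/5}$ and $\tau(m)\leq \ell(\tfrac15)m^{1/5}$, so $\tau(D)\tau(m)\ll |D|^{1/5}m^{1/5}$ up to the constant $\ell(\tfrac15)^2$. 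Similarly the logarithmic factor $\log\frac{2|D|^{1/2}}{Y}$ is, under the hypothesis $Y<10^{-100}|D|^{-11}m^{-13}$ (resp.\ $m^{-21}$), at most a modest power of $\log(|D|m)$ which is itself $\ll (|D|m)^{\varepsilon}$; rather than carry $\varepsilon$'s I would just bound it by, say, $|D|^{1/100}m^{1/100}$ using that $\log t \le t^{1/100}$ for $t$ above an absolute threshold, and check the small-$|D|$, small-$m$ range by hand or simply note that the final margin ($10^{-100}$ versus $\sim 10^5$) is enormous. Collecting all powers, the right side of Theorem~\ref{Thm2} is bounded by an absolute constant (comfortably below $10^{10}$, say) times $|D|^{\frac{71}{60}+\frac15+\frac1{100}}\, m^{\frac32+\frac15+\frac1{100}}\, Y^{\frac2{15}}$ in the $d<0$ case, and the same with an extra factor $m$ when $d>0$.

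It then remains to show this is $<\tfrac12$ under the stated bound on $Y$. Since the exponent of $Y$ is $\tfrac2{15}$, a hypothesis of the shape $Y < c\,|D|^{-\beta}m^{-\alpha}$ contributes $Y^{2/15} < c^{2/15}|D|^{-2\beta/15}m^{-2\alpha/15}$, so I need $\tfrac2{15}\beta$ to exceed the exponent of $|D|$, i.e.\ $\beta > \tfrac{15}{2}(\tfrac{71}{60}+\tfrac15+\tfrac1{100})\approx 10.1$, and similarly $\tfrac2{15}\alpha$ to exceed the exponent of $m$, i.e.\ $\alpha > \tfrac{15}{2}(\tfrac32+\tfrac15+\tfrac1{100})\approx 12.8$ for $d<0$ and $\approx 20.3$ for $d>0$; the choices $\beta = 11$, $\alpha = 13$ (resp.\ $21$) are safely larger. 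Finally, with $c = 10^{-100}$ the prefactor $c^{2/15} = 10^{-40/3} < 10^{-13}$ overwhelms the collected constant ($< 10^{10}$ or so), giving a total $< \tfrac12$; the extra $m$ in the $d>0$ case is harmless because $\alpha=21$ was chosen with room to spare. The main (really the only) obstacle is the bookkeeping: keeping careful track of how the several explicit constants and the logarithmic and divisor-function factors combine, and confirming that the crude bound $\log t \le t^{1/100}$ and the handful of small cases it excludes do not break the argument — but given the $10^{-100}$ slack this is routine and I would not belabor it.
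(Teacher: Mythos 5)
Your overall strategy is exactly the paper's (the paper proves this corollary in one line: put $\delta=\tfrac15$ in Theorem~\ref{Thm2}, use $\ell(\tfrac15)=28.117$, and do "rough estimates" until the error is below $\tfrac12$), so the outline is fine. But the numerical execution you propose does not actually close, for two concrete reasons. First, under the hypothesis $Y<10^{-100}|D|^{-11}m^{-13}$ the factor $\log\frac{2|D|^{1/2}}{Y}$ is \emph{not} bounded by any power of $\log(|D|m)$: $Y$ has no lower bound, so this factor can be arbitrarily large. What saves the argument is that the relevant product $Y^{\frac13-\delta}\log\frac{2|D|^{1/2}}{Y}$ is increasing in $Y$ for $Y<2|D|^{1/2}e^{-1/(\frac13-\delta)}$, so the worst case is $Y$ equal to the threshold value; you need to say this (or sacrifice a tiny power of $Y$ to absorb the log), not bound the log by a function of $|D|,m$ alone.

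Second, the "enormous margin" claim is not correct, and the crude substitutions you rely on fail in a range far too large to "check by hand". With $\delta=\tfrac15$ the accumulated constant is $106954\cdot\zeta^2(\tfrac65)\cdot\ell(\tfrac15)^3\cdot\log 5\approx 1.2\times10^{11}$ (not "comfortably below $10^{10}$"), while $10^{-100\cdot\frac2{15}}=10^{-40/3}\approx10^{-13.3}$, so only about two orders of magnitude of slack remain before the factor $\log\frac{2|D|^{1/2}}{Y}$, which at the threshold is already $\geq 100\log 10\approx 230$. Since your exponent margins are tiny ($\beta=11$ gives $|D|^{-0.073}$; also note $\tfrac{15}{2}(\tfrac{71}{60}+\tfrac15+\tfrac1{100})\approx 10.45$, not $10.1$), the inequality must essentially hold already at $|D|=3$, $m=1$; but there your bound, with $\tau(m)\le\ell(\tfrac15)m^{1/5}$ wasting a factor $\approx 28$ at $m=1$, evaluates to about $1.2>\tfrac12$, and it stays above $\tfrac12$ for all $|D|,m$ up to astronomically large sizes. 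Moreover $\log t\le t^{1/100}$ only holds for $t\gtrsim e^{650}$, so that device creates another huge "small case" range. The corollary is still provable along these lines, but only with less wasteful bookkeeping: keep $\tau(D)\tau(m)$ and the logarithm as they are (or use, say, $\tau(D)\le\ell(\tfrac14)|D|^{1/4}$ together with $\tau(m)\le\ell(\tfrac15)m^{1/5}$), evaluate at the worst-case $Y$, and bound the resulting explicit function of $(|D|,m)$ directly; with the actual divisor values the maximum is comfortably below $\tfrac12$, but with your blanket power substitutions it is not.
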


	In the next section we give some background material. In Section \ref{sec:projectedSeries}
	we provide expressions for the necessary 
	Fourier coefficients of Poincar\'e series of half-integral weight in the Kohnen plus space.
	In Section
	\ref{sec:zeta}
	we obtain estimates for the Kloosterman-Selberg zeta function.
	In Section
	\ref{sec:perron}  we apply Perron's formula and obtain an effective bound 
	for the sums of plus space Kloosterman sums. 
	In the last section we 
	complete the proofs of Theorem~\ref{Thm2} and
	Corollary~\ref{Coro3}.

	\section{Preliminaries}
	Let $\Gamma = \Gamma_0(4)$.
	We say that $\nu:\Gamma\to \C^\times$ is a multiplier system of weight $k\in \mathbb{R}$ if
	\begin{enumerate}[label=(\roman*)]
		\item $|\nu|=1$,
		\item $\nu(-I)=e^{-\pi i k}$, and
		\item $\nu(\gamma_1 \gamma_2) =w(\gamma_1,\gamma_2)\nu(\gamma_1)\nu(\gamma_2)$ for all $\gamma_1,\gamma_2\in \Gamma$, where
		\[w(\gamma_1,\gamma_2)\defeq j(\gamma_2,\tau)^k j(\gamma_1,\gamma_2\tau)^k j(\gamma_1\gamma_2,\tau)^{-k} \text{ \ and \ } j(\gamma,\tau)\defeq\frac{cz+d}{|cz+d|}. \]
	\end{enumerate}
	If $\nu$ is a multiplier system of weight $k$, then it is also a multiplier system of weight $k'$ for any $k'\equiv k\Mod 2$, and the conjugate $\overline\nu$ is a multiplier system of weight $-k$.

	For $\gamma\in \SL_2(\R)$ we define the weight $k$ slash operator
	by
	\be
	f\sl_k \gamma := j(\gamma,z)^{-k} f(\gamma z),
	\ee
	and the weight $k$ hyperbolic Laplacian by
	\bee
	\Delta_k := y^2 \bigg( \frac{\partial^2}{\partial x^2} + \frac{\partial^2}{\partial y^2} \bigg) - iky \frac{\partial}{\partial x}.
	\eee
	A real analytic function $f:\HH\to\C$ is an eigenfunction of $\Delta_k$ with eigenvalue $\lambda$ if
	\be
	\label{eq:eigen}
	\Delta_k f + \lambda f = 0.
	\ee
	For such a $f$, we write
	\bee
	\lambda = \tfrac 14 + r^2,
	\eee
	and refer to $r$ as the spectral parameter of $f$.
	A function $f:\HH\to\C$ is automorphic of weight $k$ and multiplier $\nu$ for $\Gamma$ if
	\be
	\label{eq:slashtransformation}
	f \sl_k \gamma = \nu(\gamma) f \qquad \text{ for all }\gamma\in\Gamma.
	\ee
	Let $\mathcal{A}_k(N,\nu)$ denote the space of all such functions.
	A smooth automorphic function which is also an eigenfunction of $\Delta_k$ and which has at most polynomial growth at the cusps of $\Gamma$
	is called a Maass form.
	Let $\mathcal{L}_k(\nu)$ 
	denote the $L^2$-space of automorphic functions with respect to the Petersson inner product
	\bee
	\label{eq:petersson}
	\langle f,g \rangle := \int_{\Gamma\backslash\HH} f(z) \overline{g(z)} \, \mathrm{d}\mu, \qquad \mathrm{d}\mu := \frac{\mathrm{d}x\,\mathrm{d}y}{y^2}.
	\eee
	The spectrum of $\Delta_k$ is real and contained in $[\lambda_0(k),\infty)$, where $\lambda_0(k) := \frac {|k|}2 (1-\frac{|k|}2)$.

	Let $\mathcal{V}_k(\nu)$ denote the orthogonal complement in $\mathcal{L}_k(\nu)$ of the space generated by Eisenstein series.
	The spectrum of $\Delta_k$ on $\mathcal{V}_k(\nu)$ is countable and of finite multiplicity.
	Eigenfunctions of $\Delta_k$ in $\mathcal{V}_k(\nu)$ are called Maass cusp forms.
	Let $\{f_j\}$ be an orthonormal 
	basis of $\mathcal{V}_k(\nu)$, and for each $j$ let $\lambda_j = \frac 14+r_j^2$
	denote the  Laplace eigenvalue.

	We are interested in the multiplier system
	$\nu_{\theta}$ of weight 
	$\frac{1}{2}$ on $\Gamma$
	defined by
	\be
	\theta(\gamma z) = \nu_{\theta}(\gamma) \sqrt{cz+d}\; \theta(z)
	\qquad \text{for } \gamma = \begin{psmallmatrix}
		a&b\\c&d
	\end{psmallmatrix}
	\in
	\Gamma,
	\ee
	where we denote $\(\frac \cdot\cdot\)$ as the extended Kronecker symbol and
\begin{equation}
\theta(z) \defeq \sum_{n\in\Z} e(n^2 z), \quad \nu_{\theta}(\gamma)=\(\frac cd\)\ep_d^{-1}, \quad \ep_d=\left\{ \begin{array}{ll}
	1&d\equiv 1\Mod 4,\\
	i&d\equiv 3\Mod 4. 
\end{array}\right.
\end{equation}

	Let $k=\pm \frac12$ and let 
	$c$ be a positive integer divisible by $4$.
	For integers
	$m, n$ satisfying
	$(-1)^{k-\frac12} n \equiv 0,1 \Mod{4}$
	and
	$(-1)^{k-\frac12} m \equiv 0,1 \Mod{4}$,
	define
	(\cite[(1.9)]{ADinvariants})
	\be
	\label{eq:skplus}
	S_{k}^{+}(m,n,c) \defeq e\( -\frac{k}{4}\)
	\sum_{d\Mod{c}} \(\frac{c}{d}\) \varepsilon_{d}^{2k} 
	e\(\frac{m\bar{d} +nd}{c} \) \times \begin{cases}
		1 \quad &\text{ if } 8 \mid c,\\
		2 \quad &\text{ if } 4 \mid\mid c,
	\end{cases}
	\ee
	where $d \bar{d} \equiv 1\Mod{c}$.
	We have the following equality for these Kloosterman sums: 
	\be
	\label{eq:KloostermanRel}
	S_{k}^{+}(m,n,c) = S_{k}^{+}(n,m,c) = S_{-k}^{+}(-m,-n,c)\in \R.
	\ee
	They also have the Weil-type bound
	(\cite[(1.12)]{ADinvariants})
	\be \label{WeilBd}
	|S_{k}^{+}(m,n,c)|\leq 2\tau(c) (m,n,c)^{\frac12} c^{\frac12}.
	\ee

	We factor a negative discriminant $D<0$ as $D=dd'$ where $d$ is a fundamental discriminant and $d'$ is a discriminant. Kohnen's identity (\cite[Proposition~5]{Kohnen85} and \cite[Lemma~8]{dit}) 
	shows that quadratic Weyl sums are finite sums of Kloosterman sums. Specifically, we have 
	\begin{align}
		\label{eq:weylKloosterman}
		\nonumber
		T_m(d,d';c)&\defeq\sum_{\substack{b\Mod c\\b^2\equiv D(c)}}\chi_d\(\left[\frac c4,b,\frac{b^2-D}c\right]\)e\(\frac{2mb}c\)\\
		&=\sum_{n|(m,\frac c4)}\(\frac dn\)\sqrt{\frac{2n}{c}}\,S_{\frac12}^+\(d',\frac{m^2}{n^2}d,\frac cn\).
	\end{align}
	The following lemma relates 
	$\Tr_d j_m(z_D)$
	to Weyl sums.
	\begin{lemma}
		\label{lem:TrWeyl}
		Let $D = dd'$ be a negative discriminant and $d$ a fundamental discriminant.
		For $Y>0$ we have
		\begin{multline*}
			\Tr_d j_m(z_D)+24\delta_d\sigma_1(m)h(D)-\sum_{z_Q\in\mathcal{R}(Y)}\chi_d(Q)e(-mz_Q)\\
			=\!\!\!\sum_{4|c\geq \frac{2|D|^{1/2}}Y}\!\!\!T_m(d,d';c)\sinh \(\frac{4\pi m}c |D|^{\frac12}\)
			-\frac12 \!\!\!\sum_{4|c<\frac{2|D|^{1/2}}Y} \!\!\!T_m(d,d';c) \exp\(-\frac{4\pi m}c |D|^{\frac12}\)
		\end{multline*}
		and
		\begin{align*}
			\nonumber
			\Tr_d j_m(z_D)+24\delta_d\sigma_1(m)h(D)&-\sum_{z_Q\in\mathcal{R}(Y)}\big(\chi_d(Q)e(-mz_Q)-\chi_d(Q) e(-m\overline{z_Q})\big)\\
			&=\sum_{4|c\geq \frac{2|D|^{1/2}}Y} T_m(d,d';c)\sinh \(\frac{4\pi m}c |D|^{\frac12}\). 
		\end{align*}
	\end{lemma}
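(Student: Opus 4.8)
The plan is to reduce the lemma to a single \emph{exact} Petersson-type identity for the trace, namely
\begin{equation}\label{plan:exact}
\Tr_d j_m(z_D)=-24\delta_d\sigma_1(m)h(D)+\sum_{4|c}T_m(d,d';c)\,\sinh\!\left(\frac{4\pi m}{c}|D|^{\frac12}\right),
\end{equation}
where the $c$-sum is the (conditionally convergent) limit of its partial sums over $4|c\le X$, and then to split that sum at the threshold $c_0:=2|D|^{\frac12}/Y$. To establish \eqref{plan:exact} I would follow \cite{ADasymptotic}: express $\Tr_d j_m(z_D)$ through a weight-$\frac12$ Poincar\'e series of index $-m$ on $\Gamma_0(4)$ projected to Kohnen's plus space, whose $|D|$-th Fourier coefficient has a Petersson/Rademacher expansion $\sum_{4|c}c^{-1}S^{+}_{1/2}(\cdots;c)\,I_{1/2}\!\left(\tfrac{4\pi m}{c}|D|^{\frac12}\right)$ plus an Eisenstein contribution; the identity $I_{1/2}(x)=\sqrt{2/(\pi x)}\,\sinh x$ turns the Bessel factor into $\sinh$, Kohnen's identity \eqref{eq:weylKloosterman} assembles the plus-space Kloosterman sums over $n|(m,\tfrac c4)$ exactly into $T_m(d,d';c)$, and the Eisenstein contribution (nonzero only for $d=1$) supplies $-24\delta_d\sigma_1(m)h(D)$. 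Alternatively \eqref{plan:exact} can be built from scratch by inserting $j_m=q^{-m}+O(q)$ into the definition of the trace and using reduction theory for positive definite binary quadratic forms of discriminant $D$, but the Poincar\'e-series route is shorter and is essentially already contained in \cite{ADasymptotic}.

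Granting \eqref{plan:exact}, the two displayed formulas follow by bookkeeping. For a form $\left[\tfrac c4,b,\tfrac{b^2-D}{c}\right]$ of discriminant $D$ (so $4|c$ and $b^2\equiv D\Mod{c}$) the attached CM point is $z_Q=(-2b+2i|D|^{\frac12})/c$, for which one computes directly
\begin{equation}\label{plan:epoints}
\begin{aligned}
e(-mz_Q)&=e\!\left(\tfrac{2mb}{c}\right)e^{\frac{4\pi m}{c}|D|^{\frac12}}, \\
e(-mz_Q)-e(-m\overline{z_Q})&=2\,e\!\left(\tfrac{2mb}{c}\right)\sinh\!\left(\tfrac{4\pi m}{c}|D|^{\frac12}\right),
\end{aligned}
\end{equation}
and $\im z_Q=2|D|^{\frac12}/c$, so $z_Q\in\mathcal R(Y)\iff c<c_0$ (the constraint $-\tfrac12\le\re z_Q<\tfrac12$ picking out one representative per translation class). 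As $b$ runs over residues mod $c$ with $b^2\equiv D\Mod{c}$ it covers each CM point of leading coefficient $c/4$ in the strip exactly twice (the shift $b\mapsto b+c/2$ is $z_Q\mapsto z_Q-1$ and fixes both $e(2mb/c)$ and $\chi_d$), so comparing with the definition of $T_m(d,d';c)$ and using the second identity in \eqref{plan:epoints} gives
\begin{equation*}
\sum_{4|c<c_0}T_m(d,d';c)\,\sinh\!\left(\tfrac{4\pi m}{c}|D|^{\frac12}\right)=\sum_{z_Q\in\mathcal R(Y)}\chi_d(Q)\big(e(-mz_Q)-e(-m\overline{z_Q})\big),
\end{equation*}
which together with \eqref{plan:exact} and the split at $c_0$ is the second formula. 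For the first, write $\sinh x=\tfrac12 e^{x}-\tfrac12 e^{-x}$ in the head $4|c<c_0$: by the first identity in \eqref{plan:epoints} the $\tfrac12 e^{x}$ part reproduces $\sum_{z_Q\in\mathcal R(Y)}\chi_d(Q)e(-mz_Q)$, while the $-\tfrac12 e^{-x}$ part leaves $-\tfrac12\sum_{4|c<c_0}T_m(d,d';c)e^{-\frac{4\pi m}{c}|D|^{\frac12}}$, which is exactly the first formula.

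The genuinely substantive input is \eqref{plan:exact}; everything afterwards is elementary but still requires care with: (i) the weights $\omega_Q$ at the two elliptic fixed points of $\mathcal F$; (ii) matching the half-open boundary $-\tfrac12\le\re z<\tfrac12$ of $\mathcal R(Y)$ to the chosen set of residue representatives; (iii) the exact $2$-to-$1$ correspondence between residues $b\bmod c$ and CM points of leading coefficient $c/4$ in the strip, which is precisely what gets absorbed by $\sinh=\tfrac12(e^{x}-e^{-x})$ and the $\tfrac12$ in the first formula; (iv) imprimitive forms $\left[\tfrac c4,b,\tfrac{b^2-D}{c}\right]$, on which $\chi_d$ must be read as the corresponding class value; and (v) the exact shape of the Eisenstein term, which needs the structure of the weight-$\frac12$ plus-space Poincar\'e series. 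I expect (i)--(v) to be routine, with the derivation or citation of \eqref{plan:exact} --- together with the conditional convergence of the tail $\sum_{4|c\ge c_0}$, which is the cancellation in sums of Kloosterman sums to be made effective in Section~\ref{sec:perron} --- being the main obstacle. Finally, the hypothesis $0<Y\le 1/(2\pi m)$ ensures $c_0\ge 4\pi m|D|^{\frac12}$, so for $c\ge c_0$ one has $\tfrac{4\pi m}{c}|D|^{\frac12}\le1$ and the $\sinh$ factors remain bounded, which is what later makes the error term effectively estimable.
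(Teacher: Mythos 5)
Your proposal is correct and follows essentially the same route as the paper: the paper's proof is just a citation of \cite{ADasymptotic}, namely their exact formula (3.3) (your identity $\Tr_d j_m(z_D)=-24\delta_d\sigma_1(m)h(D)+\sum_{4\mid c}T_m(d,d';c)\sinh(4\pi m|D|^{1/2}/c)$) together with their Lemma~3.2 (your CM-point/partial-sum dictionary via $z_Q=(-2b+2i|D|^{1/2})/c$, the $2$-to-$1$ count of residues $b\bmod c$, and the split at $c_0=2|D|^{1/2}/Y$). You have simply reconstructed, correctly, the content of the two cited results rather than citing them.
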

	\begin{proof}
		This lemma follows from \cite[(3.3)]{ADasymptotic}
		and \cite[Lemma~3.2]{ADasymptotic}
	\end{proof}

	\section{
		Fourier coefficients of projected Poincar\'e series}
	\label{sec:projectedSeries}
Let $\Gamma=\Gamma_0(4)$ and recall our weight $\frac12$ multiplier $\nu_\theta$ on $\Gamma$. 
	For any cusp $\mathfrak a$ of $\Gamma$, its scaling matrix $\sigma_{\mathfrak{a}}\in\GL_2^+(\R)$ satisfies 
	\[\sigma_{\mathfrak{a}}\infty=\mathfrak{a} \text{ \ and \ } \sigma_{\mathfrak{a}}^{-1}\Gamma_{\mathfrak{a}}\sigma_{\mathfrak{a}}=\Gamma_\infty,\]
	where $\Gamma_{\mathfrak{a}}$ is the stabilizer of $\mathfrak{a}$ and in particular $\Gamma_\infty=\{\pm\begin{psmallmatrix}
		1&n\\0&1
	\end{psmallmatrix}:n\in \Z\}$. We define $\alpha_{\mathfrak{a},\nu_\theta}\in[0,1)$ by
	\[\nu_\theta\(\sigma_{\mathfrak{a}}\begin{pmatrix}
		1&1\\0&1
	\end{pmatrix}\sigma_{\mathfrak{a}}^{-1}\)=e\(-\alpha_{\mathfrak{a},\nu_\theta}\).\]
	For $m\in \Z$, define $m_\mathfrak{a}\defeq m-\alpha_{\mathfrak{a},\nu_\theta}$. One can check that $m_{\infty}=m$. 
	The Kloosterman sums for a cusp pair $(\mathfrak{a},\mathfrak{b})$ is given by
	\be
	\label{eq:KlooSumAB}
	S_{\mathfrak{a},\mathfrak{b}}(m,n,c,\nu)=\sum_{\gamma=\begin{psmallmatrix}
			a&b\\c&d
		\end{psmallmatrix}\in \Gamma_\infty\setminus\sigma_{\mathfrak{a}}^{-1}\Gamma
		\sigma_{\mathfrak{b}}/\Gamma_\infty}
	\overline{{\nu}_{\mathfrak{a}\mathfrak{b}}}\begin{pmatrix}
		a&b\\c&d
	\end{pmatrix}
	e\left(\frac{m_{\mathfrak a}a+n_\mathfrak b d}{c}\right), 
	\ee
	where 
	\bee
	\nu_{\mathfrak{a}\mathfrak{b}}(\gamma)
	=\nu_\theta(\sigma_{\mathfrak{a}}\gamma\sigma_{\mathfrak{b}}^{-1})
	\dfrac{w(\sigma_{\mathfrak{a}}\gamma\sigma_{\mathfrak{b}}^{-1},\sigma_{\mathfrak{b}})}    {w(\sigma_{\mathfrak{a}},\gamma)}.
	\eee
	If $f \in  \mathcal V_{\frac12}(\nu_\theta)$, the space of Maass cusp forms,
	then for each cusp $\mathfrak a$ we have
	\be
	\label{eq:fouriercusp}
	(f|_{\frac12}\sigma_{\mathfrak{a}})(z) = \sum_{n\in \Z}c_{f,\mathfrak{a}}(n,y)e(n_{\mathfrak a}x).
	\ee
	This space decomposes as
	$ \mathcal V_{\frac12}(\nu_\theta) =  \mathcal V^{+}_{\frac12}(\nu_\theta)
	\oplus
	\mathcal V^{-}_{\frac12}(\nu_\theta)$, where for $f \in  \mathcal V_{\frac12}(\nu_\theta)$,
	we have 
	${f \in  \mathcal V^+_{\frac12}(\nu_\theta)}$
	if and only if
	$c_{f,\infty}(n,y) =0$
	for 
	$n \equiv 2,3 \Mod{4}$.
	
	There are three nonequivalent cusps of $\Gamma_0(4)$ represented by
	$\infty$, $0$ and $\frac12$. We choose their respective scaling matrices as
	\bee
	\sigma_\infty=I,\qquad \sigma_0=\begin{pmatrix}
		0&-\frac12\\2&0
	\end{pmatrix},\qquad \sigma_{\frac12}=\begin{pmatrix}
		1&-\frac12\\2&0
	\end{pmatrix} 
	\eee
	with
	\bee
	\alpha_{\infty, \nu_\theta}=0, \qquad \alpha_{0,\nu_\theta}=0, \qquad 
	\alpha_{\frac12,\nu_\theta}= \tfrac{3}{4}.
	\eee
	Note that 
	\be
	\label{eq:S+12}
	S_{\frac12}^+(m,n,c)=
	e(-\tfrac 18)S_{\infty,\infty}(m,n,c,\nu_\theta)
	\times
	\begin{cases}
		1  &\text{ if } 8\mid c,\\
		2  &\text{ if } 4\mid\mid c.
	\end{cases}
	\ee

	Following the notation in 
	\cite[\S 5]{ADinvariants},
	we define Kohnen's operator $L$ on 
	automorphic functions as follows. 
	If $f$ satisfies 
	\eqref{eq:slashtransformation} with $\Gamma = \Gamma_0(4)$, then
	\be
	Lf \defeq \frac{1}{2(1+i)} \sum_{w=0}^{3}
	f\sl_{\frac12} \begin{pmatrix}
		1+w & \tfrac14\\
		4w & 1
	\end{pmatrix}.
	\ee
	Kohnen's operator $L$ commutes with the weight $\frac12$ slash operator and is self-adjoint with respect to the Petersson inner product. It also satisfies $(L-1)(L+\frac12)=0$. 
	Let $\pr^+\defeq\frac23\(L+\frac12\)$. 
	Then  $(\pr^+)^2=\pr^+$ and we have a projection operator 
	\bee
	\pr^+:
	\mathcal{V}_{\frac12}(\nu_\theta)
	\rightarrow \mathcal{V}^+_{\frac12}(\nu_\theta).
	\eee

\begin{lemma}[\cite{ADinvariants}, Lemmas 5.1 and 5.5]
\label{lem:ADlemmas_cfa}
Suppose that 
$f \sl_{\frac12} \gamma = \nu_\theta(\gamma) f $  for all $\gamma\in\Gamma$.
For each 
cusp $\mathfrak{a}$ of 
$\Gamma$
write the Fourier expansion of $f$ as
in \eqref{eq:fouriercusp}.
Then 
\bee
\label{eq:cL}
c_{(L+\frac12)f,\infty}(n,y)=
\begin{cases}         
c_{f,\infty}(n,y)+\frac1{2(1-i)} c_{f,0}\left(\frac n4,4y\right) &\text{ if } n\equiv 0\Mod{4},\\
c_{f,\infty}(n,y)+\frac1{2(1-i)} c_{f,\frac12}\left(\frac {n+3}4,4y\right) &\text{ if } n\equiv 1\Mod{4},\\
0&\text{ otherwise}. 
\end{cases}
\eee
Moreover, for $4\mid \mid c$ we have
\bee
S_{\infty,\infty}(m,n,c,\nu_\theta)=(1+i)\times 
\begin{cases}	S_{\infty,0}\left(m,\frac n4,\frac c2,\nu_\theta\right)
	&\text{ if } n\equiv 0\Mod 4, \\
	S_{\infty,\frac12}\left(m,\frac {n+3}4,\frac c2,\nu_\theta\right)
	&\text{ if } n\equiv 1\Mod 4. 
\end{cases}
\eee
\end{lemma}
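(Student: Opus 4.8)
Both identities are exactly Lemmas~5.1 and~5.5 of \cite{ADinvariants}; here is the shape of the argument. The plan is to reduce everything to explicit manipulations with the matrix definition of Kohnen's operator $L$ together with the Fourier expansions of $f$ at the three cusps $\infty,0,\tfrac12$ of $\Gamma_0(4)$.

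For the Fourier-coefficient identity, I would start from
\[
(L+\tfrac12)f=\frac1{2(1+i)}\sum_{w=0}^{3}f\sl_{\frac12}g_w+\tfrac12 f,\qquad g_w\defeq\begin{psmallmatrix}1+w&\frac14\\4w&1\end{psmallmatrix},
\]
and, for each $w$, factor $g_w=\gamma_w\,\sigma_{\mathfrak a_w}\,\tau_w$ with $\gamma_w\in\Gamma_0(4)$, a cusp $\mathfrak a_w$, and an upper-triangular $\tau_w$. Here $g_0\infty=\infty$, while $g_w\infty=\tfrac{1+w}{4w}$ reduces to $\tfrac12,\tfrac38,\tfrac13$ for $w=1,2,3$, which are $\Gamma_0(4)$-equivalent to $\tfrac12,\infty,0$; so $\mathfrak a_w=\infty$ for $w=0,2$, $\mathfrak a_1=\tfrac12$, $\mathfrak a_3=0$, and one checks $\tau_w$ may be taken upper triangular with diagonal $(1,1)$ for $w=0,2$ and $(2,\tfrac12)$ for $w=1,3$. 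Using the automorphy $f\sl_{\frac12}\gamma_w=\nu_\theta(\gamma_w)f$ and the cocycle property of the weight-$\tfrac12$ slash operator, each summand becomes an explicit constant times $(f\sl_{\frac12}\sigma_{\mathfrak a_w})$ composed with the action of $\tau_w$; substituting the expansion \eqref{eq:fouriercusp} at $\mathfrak a_w$, the scaling contained in $\tau_w$ replaces the argument $y$ by $4y$ and the index $n$ by $n/4$ (when $\mathfrak a_w=0$) or by $(n+3)/4$ (when $\mathfrak a_w=\tfrac12$, using $\alpha_{\frac12,\nu_\theta}=\tfrac34$). Finally I would collect the coefficient of $e(nx)$ and add $\tfrac12 f$: the two cusp-$\infty$ matrices $w=0,2$ contribute $\tfrac1{2(1+i)}\bigl(e(n/4)+i\,e(-n/4)\bigr)c_{f,\infty}(n,y)$, which together with $\tfrac12 c_{f,\infty}(n,y)$ equals $c_{f,\infty}(n,y)$ when $n\equiv0,1\Mod{4}$ and vanishes when $n\equiv2,3\Mod{4}$, while the $w=3$ and $w=1$ terms produce the cross-cusp summands $\tfrac1{2(1-i)}c_{f,0}(n/4,4y)$ and $\tfrac1{2(1-i)}c_{f,\frac12}((n+3)/4,4y)$, respectively. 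The one genuinely delicate point is the constant-chasing: one must evaluate $\nu_\theta(\gamma_w)$ from its explicit formula and the cocycle factors attached to the factorization $g_w=\gamma_w\sigma_{\mathfrak a_w}\tau_w$, and verify that each such constant collapses (here, to $i$); I expect the bulk of the work to live there, since it needs care with the branch of $j(\gamma,z)^{1/2}$ and with the values of $\varepsilon_d$.

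For the Kloosterman-sum identity I would obtain it as the ``Fourier dual'' of the first. Applying $L+\tfrac12$ to a weight-$\tfrac12$, multiplier-$\nu_\theta$ Poincar\'e series attached to the cusp $\infty$ and frequency $m$: its $n$-th Fourier coefficient at $\infty$ is a fixed factor times a Kloosterman--Selberg-type series in $S_{\infty,\infty}(m,n,c,\nu_\theta)$, while its coefficients at $0$ and $\tfrac12$ are the analogous series in $S_{\infty,0}$ and $S_{\infty,\frac12}$; matching the two sides of the first identity term-by-term in $c$ then forces the stated relation, with $4\|c$ making $c/2$ the correct modulus and $n\equiv0,1\Mod{4}$ selecting which cusp appears. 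Alternatively, one can argue straight from the coset description \eqref{eq:KlooSumAB}: a general element of $\Gamma_\infty\backslash\sigma_\infty^{-1}\Gamma\sigma_0/\Gamma_\infty$ of lower-left entry $c/2$ is $\gamma\sigma_0$ with $\gamma=\begin{psmallmatrix}a&b\\c'&d\end{psmallmatrix}\in\Gamma_0(4)$, and comparing how its entries and the multiplier $\nu_{\infty 0}$ relate to the data for $\Gamma_\infty\backslash\Gamma/\Gamma_\infty$ reduces the claim to a finite check, the factor $1+i$ being the ratio of the cocycle normalizations at the two cusps. Either route, the only real obstacle is once more the bookkeeping of the half-integral-weight multiplier and cocycle factors.
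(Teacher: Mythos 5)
You should note at the outset that the paper contains no proof of this statement to compare against: Lemma~\ref{lem:ADlemmas_cfa} is imported wholesale from \cite{ADinvariants} (Lemmas 5.1 and 5.5), and the authors use it as a black box in the proof of Lemma~\ref{FourCoeffOfPoincProjToPlusSpace}. Judged on its own terms, your outline is the natural route and is presumably close to the cited source: your cusp identifications for $g_w\infty$ ($\mathfrak a_w=\infty$ for $w=0,2$, $\mathfrak a_1=\tfrac12$, $\mathfrak a_3=0$), the shape of the $\tau_w$ (translations by $\pm\tfrac14$ for $w=0,2$, and $z\mapsto 4z+1$ for $w=1,3$, which is what turns $y$ into $4y$ and $n$ into $\tfrac n4$ or $\tfrac{n+3}4$), and the arithmetic showing that the $w=0,2$ terms together with $\tfrac12 f$ give $c_{f,\infty}(n,y)$ for $n\equiv 0,1\Mod 4$ and $0$ for $n\equiv 2,3\Mod 4$ all check out (for $w=2$ one can take $\gamma_2=\begin{psmallmatrix}3&1\\8&3\end{psmallmatrix}$, and indeed $\nu_\theta(\gamma_2)=\left(\tfrac83\right)\ep_3^{-1}=i$ with trivial cocycle, as you assert).

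The genuine gap is that the proposal stops exactly where the content of the lemma lies. The constants $\tfrac1{2(1-i)}$ and $1+i$ \emph{are} the lemma, and for the cross-cusp terms $w=1,3$ you never evaluate the combined factor $\nu_\theta(\gamma_w)\cdot w(\cdot,\cdot)\cdot e(-\alpha_{\mathfrak a_w,\nu_\theta})$ coming from the factorization $g_w=\gamma_w\sigma_{\mathfrak a_w}\tau_w$ and from the translation part of $\tau_w$ (at the cusp $\tfrac12$ the extra phase $e(-\tfrac34)$ enters); asserting that each such constant ``collapses to $i$'' is precisely the verification that is owed, and here the cocycle between $\sigma_{\mathfrak a}$ and $\tau_w$ is not trivially $1$ as it was for $w=0,2$. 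For the second identity, your preferred derivation --- applying $L+\tfrac12$ to a Poincar\'e series and ``matching term-by-term in $c$'' --- is not yet an argument: equality of two Dirichlet-type series $\sum_c S(\cdot)c^{-2s}B(c,\cdot)$ does not give equality of individual terms without an extra step (say, letting $\re s\to\infty$ and invoking uniqueness of Dirichlet-series coefficients), and within this paper that route would be circular, since Lemma~\ref{FourCoeffOfPoincProjToPlusSpace} is deduced \emph{from} the present lemma. Your alternative, the direct comparison of double cosets in \eqref{eq:KlooSumAB} for $4\mid\mid c$, is the right mechanism, but as written it is only gestured at and again reduces to the same unperformed bookkeeping of $\nu_{\infty 0}$, $\nu_{\infty\frac12}$ and the cocycle normalizations that produce the factor $1+i$.
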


For 
$m>0$ and 
$(k,\nu) = (\frac 12, \nu_{\theta})$ or $(-\frac 12, \overline{\nu_{\theta}})$, define
the Poincar\'e series $P_m(z,s,k,\nu) \in 
\mathcal{V}_{k}(\nu)$ by
\be
P_m(z,s,k,\nu)=\sum_{\gamma=\begin{psmallmatrix}
		a&b\\c&d
	\end{psmallmatrix}\in\Gamma_\infty\setminus\Gamma}\overline{\nu(\gamma)} j(\gamma,z)^{-k} \im(\gamma z)^s e(m\gamma z), \qquad \re s>1.
\ee
In the following lemma we write
$c_{(L+\frac12)P_m(z,s,\frac12,\nu_\theta),\infty}(n,y)$
as a sum
involving 
Kloosterman sums.
\begin{lemma}\label{FourCoeffOfPoincProjToPlusSpace}
	For integers
	$m, n$ satisfying
	$m, n \equiv 0,1 \Mod{4}$,
	and $m>0$
	we have
	\bee
	c_{(L+\frac12)P_m(z,s,\frac12,\nu_\theta),\infty}(n,y)=y^s e(\tfrac 18)\sum_{4|c>0}\frac{S_{\frac12}^+(m,n,c)}{c^{2s}}B(c,m,n,y,s,\tfrac12)
	+\delta_{m,n}y^se^{-2\pi my}, 
	\eee
	where 
	\be
	\label{eq:Bcmnysk}
	B(c,m,n,y,s,k) = 
	y\int_\R 
	\left(\frac{u+i}{|u+i|}\right)^{-k}
	e\left(-\frac{m}{c^2y(u+i)}-nuy\right)
	\frac{\mathrm{d}u}{y^{2s}(u^2+1)^{s}}.
	\ee
\end{lemma}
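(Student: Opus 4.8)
The plan is to compute the Fourier expansion of $P_m(z,s,\tfrac12,\nu_\theta)$ at each of the three inequivalent cusps $\infty$, $0$, $\tfrac12$ of $\Gamma=\Gamma_0(4)$ and then assemble the result using the two identities of Lemma~\ref{lem:ADlemmas_cfa} (below I abbreviate $c_{P_m(z,s,\frac12,\nu_\theta),\mathfrak a}$ to $c_{P_m,\mathfrak a}$). First I would carry out the standard unfolding at $\infty$: split $\Gamma_\infty\backslash\Gamma$ into the identity coset, which contributes $y^se(mz)$ with $n$-th Fourier coefficient $\delta_{m,n}y^se^{-2\pi my}$ (this survives only when $m=n$, hence compatibly with $n\equiv0,1\Mod4$), and the cosets with fixed lower-left entry $c$ (necessarily $4\mid c$, $c>0$) and lower-right entry $d$ mod $c$. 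For the latter one uses $\gamma z=\tfrac ac-\tfrac1{c^2(z+d/c)}$, $\im(\gamma z)=y/(c^2|z+d/c|^2)$, $j(\gamma,z)=(cz+d)/|cz+d|$, unfolds the period integral $\int_0^1(\cdots)e(-nx)\,dx$ to an integral over $\R$, and makes the substitution $x+d/c=y(u+i)$; this turns the inner integral into $y^sc^{-2s}B(c,m,n,y,s,\tfrac12)$ with $B$ as in \eqref{eq:Bcmnysk}, while the $d$-sum collapses to $S_{\infty,\infty}(m,n,c,\nu_\theta)$ of \eqref{eq:KlooSumAB} (using $a\equiv\bar d\Mod c$ and $\nu_{\infty\infty}=\nu_\theta$). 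Hence
\[
c_{P_m,\infty}(n,y)=\delta_{m,n}y^se^{-2\pi my}+y^s\sum_{4\mid c>0}\frac{S_{\infty,\infty}(m,n,c,\nu_\theta)}{c^{2s}}\,B(c,m,n,y,s,\tfrac12).
\]

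The same unfolding at $\mathfrak a=0$ and $\mathfrak a=\tfrac12$, with the scaling matrices $\sigma_0,\sigma_{\frac12}$ fixed above, produces no identity term; the relevant moduli $c'$ run over the integers $\equiv2\Mod4$ and the Fourier frequency is shifted by $\alpha_{\mathfrak a,\nu_\theta}$, giving $c_{P_m,\mathfrak a}(N,Y)=Y^s\sum_{c'\equiv2(4)}c'^{-2s}S_{\infty,\mathfrak a}(m,N,c',\nu_\theta)\,B(c',m,N-\alpha_{\mathfrak a,\nu_\theta},Y,s,\tfrac12)$. Now apply the first identity of Lemma~\ref{lem:ADlemmas_cfa}: for $n\equiv0\Mod4$ we get $c_{(L+\frac12)P_m,\infty}(n,y)=c_{P_m,\infty}(n,y)+\tfrac1{2(1-i)}c_{P_m,0}(\tfrac n4,4y)$, and for $n\equiv1\Mod4$ the same with $0$ replaced by $\tfrac12$ and $\tfrac n4$ by $\tfrac{n+3}4$ (in the latter case the frequency shift $-\alpha_{\frac12,\nu_\theta}=-\tfrac34$ inside $B$ cancels the $+3$ in $\tfrac{n+3}4$, again leaving the argument $\tfrac n4$, so the two cases run in parallel). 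Treating $n\equiv0\Mod4$, substitute $c'=c/2$ in $c_{P_m,0}(\tfrac n4,4y)$, so that $c'\equiv2\Mod4$ becomes $4\mid\mid c$; use the second identity of Lemma~\ref{lem:ADlemmas_cfa}, namely $S_{\infty,0}(m,\tfrac n4,\tfrac c2,\nu_\theta)=(1+i)^{-1}S_{\infty,\infty}(m,n,c,\nu_\theta)$ for $4\mid\mid c$, together with the elementary scaling identity $B(\tfrac c2,m,\tfrac n4,4y,s,\tfrac12)=\tfrac4{16^s}B(c,m,n,y,s,\tfrac12)$ read directly off \eqref{eq:Bcmnysk}. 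Since $(4y)^s4^s\cdot\tfrac4{16^s}=4y^s$ and $2(1-i)(1+i)=4$, all elementary factors cancel and
\[
\tfrac1{2(1-i)}\,c_{P_m,0}(\tfrac n4,4y)=y^s\sum_{4\mid\mid c}\frac{S_{\infty,\infty}(m,n,c,\nu_\theta)}{c^{2s}}\,B(c,m,n,y,s,\tfrac12).
\]

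Adding this to $c_{P_m,\infty}(n,y)$ exactly doubles the $4\mid\mid c$ part of the sum while leaving the $8\mid c$ part unchanged, so
\[
c_{(L+\frac12)P_m,\infty}(n,y)=\delta_{m,n}y^se^{-2\pi my}+y^s\sum_{4\mid c>0}\frac{B(c,m,n,y,s,\tfrac12)}{c^{2s}}\,S_{\infty,\infty}(m,n,c,\nu_\theta)\times\begin{cases}1 &\text{if } 8\mid c,\\ 2 &\text{if } 4\mid\mid c.\end{cases}
\]
By \eqref{eq:S+12} the factor $S_{\infty,\infty}(m,n,c,\nu_\theta)$ times $1$ or $2$ according as $8\mid c$ or $4\mid\mid c$ is precisely $e(\tfrac18)S_{\frac12}^+(m,n,c)$, which gives the asserted formula; the case $n\equiv1\Mod4$ is identical using the cusp $\tfrac12$.

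The step I expect to be the main obstacle is the Fourier expansion at the cusps $0$ and $\tfrac12$: one must keep exact track of (i) the phase factors arising from $\overline{\nu_{\mathfrak a\mathfrak b}}$ versus $\overline{\nu_\theta}$ and from the weight-$\tfrac12$ cocycles $w(\cdot,\cdot)$ when conjugating by $\sigma_0$ and $\sigma_{\frac12}$; (ii) the bookkeeping matching the frequency shift $\alpha_{\frac12,\nu_\theta}=\tfrac34$ with the index substitution $n\mapsto\tfrac{n+3}4$; and (iii) the verification that, after identifying $c'$ with $c/2$ and $Y$ with $4y$, the oscillatory integral produced at these cusps is exactly $B(c,m,n,y,s,\tfrac12)$ with no stray constant. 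These are the facts packaged in Lemmas 5.1 and 5.5 of \cite{ADinvariants}, which we may invoke; granting them, everything else is the elementary algebra indicated above.
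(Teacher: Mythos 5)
Your proposal is correct and follows essentially the same route as the paper: unfold to get the expansions at the three cusps, apply the two identities of Lemma~\ref{lem:ADlemmas_cfa}, use the scaling $B(\tfrac c2,m,\tfrac n4,4y,s,k)=4^{1-2s}B(c,m,n,y,s,k)$, and convert to $S^+_{1/2}$ via \eqref{eq:S+12}, with all constants matching. One small attribution slip: the expansions at the cusps $0$ and $\tfrac12$ (your worry points (i) and (iii)) are not contained in Lemmas 5.1 and 5.5 of \cite{ADinvariants} --- those give only the $(L+\tfrac12)$ coefficient relation and the $4\,\|\,c$ Kloosterman identity --- the paper instead quotes them from \cite[Lemma~3.4]{di} and \cite[(15)]{Proskurin2005}, which is exactly the unfolding you sketch.
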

\begin{proof} 
	To use 
	Lemma~\ref{lem:ADlemmas_cfa} 
	we need to find $c_{f,\mathfrak{a}}$ for all three cusps $\mathfrak{a}$ in order to compute $c_{(L+\frac12)f,\infty}$. 
	We have the following expressions for Poincar\'e series at the cusps $\infty$, $0$, and $\frac12$
	(see \cite[Lemma~3.4]{di} and \cite[(15)]{Proskurin2005} for details):
	\bee
	\label{eq:poincareinf}
	P_m(z,s,\tfrac12,\nu_\theta)=y^se(mz)+y^s\sum_{n\in \Z}e(nx)\sum_{4|c>0}\frac{S_{\infty,\infty}(m,n,c,\nu_\theta)}{c^{2s}}B(c,m,n,y,s,\tfrac12), 
	\eee
	\bee
	\label{eq:poincarezero}
	(P_m(z,s,\tfrac12,\nu_\theta)|_{\frac12}\sigma_0)(z,s)
	=y^s\sum_{n\in \Z}e(nx)
	\sum_{2\mid \mid c>0}
	\frac{S_{\infty,0}(m,n,c,\nu_\theta)}{c^{2s}}
	B(c,m,n,y,s,\tfrac12), 
	\eee
	and
	\bee
	\label{eq:poincarehalf}
	(P_m(z,s,\tfrac12,\nu_\theta)|_{\frac12}\sigma_{\frac12})(z,s)=y^s
	\sum_{n\in \Z}e(n_{\frac12}x)
	\sum_{2\mid \mid c>0}
	\frac{S_{\infty,\frac12}(m,n,c,\nu_\theta)}{c^{2s}}B(c,m,n_{\frac12},y,s,\tfrac12). 
	\eee
	Thus, 
	\bee
	c_{P_m(z,s,\frac12,\nu_\theta),\infty}(n,y)=y^s\sum_{4|c>0}\frac{S_{\infty,\infty}(m,n,c,\nu_\theta)}{c^{2s}}B(c,m,n,y,s,\tfrac12)
	+\delta_{m,n}y^se^{-2\pi my}.
	\eee
	For $n\equiv 0 \Mod{4}$ we see that
	\be
	\label{eq:coeff0}
	c_{P_m(z,s,\frac12,\nu_\theta),0}\left(\tfrac n4,4y\right)
	=
	(4y)^s\sum_{2\mid \mid c>0}
	\frac{S_{\infty,0}
		(m,\tfrac n4,c,\nu_\theta)}{c^{2s}}B(c,m,\tfrac n4,4y,s,\tfrac12),
	\ee
	and for $n\equiv 1 \Mod{4}$ we obtain
	\be
	\label{eq:coeffhalf}
	c_{P_m(z,s,\frac12,\nu_\theta),\frac12}\left(\tfrac {n+3}4,4y\right)
	=
	(4y)^s\sum_{2\mid \mid c>0}
	\frac{S_{\infty,\frac12}(m,\tfrac{n+3}{4},c,\nu_{\theta})}{c^{2s}}B(c,m,\tfrac{n}{4},4y,s,\tfrac12).
	\ee
	From \eqref{eq:Bcmnysk}
	we can see that
	$B(\frac c2,m,\frac n4,4y,s,k)=4^{1-2s}B(c,m,n,y,s,k)$.
	Thus, we can rewrite 
	\eqref{eq:coeff0}
	and
	\eqref{eq:coeffhalf}
	using 
	Lemma~\ref{lem:ADlemmas_cfa}
	as follows.
	For $n \equiv 0 \Mod{4}$ we have
\begin{align}
	\label{eq:cpm0simplified}
	\nonumber
	c_{P_m(z,s,\frac12,\nu_\theta),0}\left(\tfrac n4,4y\right)
	&=
	(4y)^s\sum_{4\mid \mid c>0}
	\frac{S_{\infty,0}
		(m,\tfrac n4,\tfrac c2,\nu_\theta)}{(c/2)^{2s}}B(\tfrac c2,m,\tfrac n4,4y,s,\tfrac12)\\
	&=
	\frac{4y^s}{1+i}\sum_{4\mid \mid c>0}
	\frac{S_{\infty,\infty}
		(m,n,c,\nu_{\theta})}{c^{2s}}B(c,m,n,y,s,\tfrac12).
	\end{align}
	Similarly, for $n \equiv 1 \Mod{4}$ we have
	\be
	\label{eq:cpmhalfsimplified}
	c_{P_m(z,s,\frac12,\nu_\theta),\frac12}\left(\tfrac {n+3}4,4y\right)
	=
	\frac{4y^s}{1+i}\sum_{4\mid \mid c>0}
	\frac{S_{\infty,\infty}(m,n,c,\nu_\theta)}{c^{2s}}B(c,m,n,y,s,\tfrac12).
	\ee
	The result now follows from 
	Lemma~\ref{lem:ADlemmas_cfa}
	and
	\eqref{eq:S+12}.
	\qedhere
\end{proof}

\section{Estimates for the Kloosterman-Selberg zeta function}
\label{sec:zeta}
In this section we will give a bound for the 
Selberg-Kloosterman zeta function $Z_{m,n}^+$ defined by
\be
Z_{m,n}^+(s)\defeq \sum_{4|c>0} \frac{S_{\frac12}^+(m,n,c)}{c^{2s}},\qquad s=\sigma+it,\quad \sigma =\re(s)>1
\ee
and with a meromorphic continuation to $\C$.  
The computations 
are similar to those in 
\cite[\S 4 - \S 6]{Oscar20},
so we omit some details.
Let
\be
I_{m,n}(s_1,s_2)\defeq \left\langle \pr^+ P_m\left(z,s_1,\tfrac12,\nu_\theta\right), \overline{P_{-n}\left(z,s_2,-\tfrac12,\overline{\nu_\theta}\right)}\right\rangle,
\ee
where  
$m, n \equiv 0,1\Mod 4$,
$m>0$, $n<0$, 
and 
$\re(s_2)>\re(s_1)>1$. Define
\begin{multline*}
	Q_{m,n}^+(s_1,s_2)\defeq 
	Z_{m,n}^+(s_1)
	\int_{-\infty}^\infty \left(\frac{u+i}{|u+i|}\right)^{-\frac12}\frac{1}{(u^2+1)^{s_1}}\\
	\times
	\(
	\int_0^\infty y^{s_2-s_1-1}e\left(-ny(u+i)\right)\(e\(\frac{-m}{c^2y(u+i)}\)-1\) \mathrm{d}y\) \mathrm{d}u.
\end{multline*}
As a result of Lemma~\ref{FourCoeffOfPoincProjToPlusSpace}, since $\pr^+=\frac23(L+\frac12)$, 
we obtain the following expression for the inner product of two Poincaré series by the unfolding method (see, e.g., \cite[Lemma~3.1]{Oscar20} 
or
\cite[page 452]{pribitkin}):
\begin{align}
	\label{Imn}
	\nonumber
	I_{m,n}(s_1,s_2)&=\frac{2\Gamma(s_2+s_1-1)\Gamma(s_2-s_1)}{3\Gamma(s_2+\frac 14)\Gamma(s_1-\frac 14)}  Z_{m,n}^+(s_1) \pi^{s_1-s_2+1} |n|^{s_1-s_2}  4^{1-s_2}\\
	&+  \frac23\;e\(\frac 18\)Q_{m,n}^+(s_1,s_2).
\end{align}
Next we need a bound for 
$I_{m,n}(s,s+2)$.

\begin{proposition}
	\label{prop:innerBound}
	Suppose that $m, n \equiv 0,1\Mod 4$, $m>0$ 
	and $n<0$.
	Let
	$s=\sigma+it$ with $\sigma=\frac12+\frac{\delta}2$ and $\delta\in(0,\frac14]$. Then we have
	\bee
	|I_{m,n}(s,s+2)|\leq 7.42\zeta^2(1+\delta)\tau(m)^{\frac{1}{2}}\tau(n)^{\frac{1}{2}}
	m^{\frac34}|n|^{-\frac 74}.
	\eee
\end{proposition}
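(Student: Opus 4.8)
The plan is to bound $I_{m,n}(s,s+2)$ by the Cauchy--Schwarz inequality and then estimate the two Poincar\'e-series norms separately, following the scheme of \cite[\S 4 -- \S 6]{Oscar20}; recall that $I_{m,n}$ at $\re(s)=\tfrac12+\tfrac\delta2<1$ is understood through the meromorphic continuation furnished by \eqref{Imn} together with that of $Z_{m,n}^+$ and $Q_{m,n}^+$. Since $\pr^+=\tfrac23(L+\tfrac12)$ is self-adjoint and idempotent on $\mathcal L_{\frac12}(\nu_\theta)$, it is an orthogonal projection, so $\|\pr^+P_m(\cdot,s,\tfrac12,\nu_\theta)\|\le\|P_m(\cdot,s,\tfrac12,\nu_\theta)\|$; as conjugation preserves the norm, this gives
\[
|I_{m,n}(s,s+2)|\ \le\ \big\|P_m(\cdot,s,\tfrac12,\nu_\theta)\big\|\cdot\big\|P_{-n}(\cdot,s+2,-\tfrac12,\overline{\nu_\theta})\big\|.
\]

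First I would dispose of the second factor. Because $\re(s+2)=\tfrac52+\tfrac\delta2>1$, the series $P_{-n}(\cdot,s+2,-\tfrac12,\overline{\nu_\theta})$ and its cusp expansions converge absolutely, and unfolding $\|P_{-n}(\cdot,s+2,-\tfrac12,\overline{\nu_\theta})\|^2$ against its own Fourier expansion at $\infty$ picks out the $(-n)$-th coefficient: one gets an explicit main term $\Gamma(2\re(s+2)-1)(4\pi|n|)^{1-2\re(s+2)}$ together with a tail $\sum_{4\mid c}S_{\frac12}^+(\cdot)c^{-2(s+2)}B(\cdot)$ that is bounded, after applying the Weil-type estimate \eqref{WeilBd} and writing $c=gc'$ with $g\mid n$, by an explicit constant times $\tau(n)|n|^{1-2\re(s+2)}$. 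Collecting these and using the explicit bounds for the $B$-integrals \eqref{eq:Bcmnysk} yields a bound of the shape $\|P_{-n}(\cdot,s+2,-\tfrac12,\overline{\nu_\theta})\|\le(\text{explicit const})\,\tau(n)^{1/2}|n|^{-7/4}$, where the exponent $-\tfrac74$ replaces the exact $-2-\tfrac\delta2$ by a clean, $\delta$-free value.

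The substantive part is the first factor, $\|P_m(\cdot,s,\tfrac12,\nu_\theta)\|$ at $\re(s)=\tfrac12+\tfrac\delta2<1$, where the defining series diverges and the continuation must be used. Here I would invoke Niebur's resolvent representation: applying $\Delta_{\frac12}-s(1-s)$ to the seed $\im(z)^{s}e(mz)$ produces an explicit multiple of a Poincar\'e-type series with shifted spectral parameter, so $P_m(\cdot,s,\tfrac12,\nu_\theta)$ can be written as $(\Delta_{\frac12}-s(1-s))^{-1}$ applied to a series lying in the absolute-convergence range (whose norm is controlled as in the previous paragraph) plus lower-order correction series. One then uses $\|(\Delta_{\frac12}-s(1-s))^{-1}\|=\mathrm{dist}(s(1-s),\mathrm{spec}\,\Delta_{\frac12})^{-1}$ together with the fact that the only small eigenvalue in the relevant weight/multiplier is $\tfrac3{16}$ (so, via the Shimura/Kohnen correspondence with level-one weight-zero forms, there is no eigenvalue in $(\tfrac3{16},\tfrac14)$): since $s(1-s)=\tfrac14-(\tfrac\delta2+it)^2$, a short computation gives $\mathrm{dist}(s(1-s),\mathrm{spec}\,\Delta_{\frac12})\ge\tfrac14\delta^2$ uniformly in $t\in\R$, whence $\|(\Delta_{\frac12}-s(1-s))^{-1}\|\le 4\delta^{-2}\ll\zeta^2(1+\delta)$. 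This is the source of $\zeta^2(1+\delta)$ in the statement; combining it with the convergent norm estimates for the shifted series and the size of the correction terms produces $\|P_m(\cdot,s,\tfrac12,\nu_\theta)\|\ll\zeta^2(1+\delta)\,\tau(m)^{1/2}m^{3/4}$.

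Multiplying the two bounds, substituting $\re(s)=\tfrac12+\tfrac\delta2$, and carefully collecting all numerical constants --- from the Gamma-function ratios, the $B$-integral bounds, the Weil estimate, the divisor-sum splittings, and the $\zeta$-value estimates --- yields the stated inequality with constant $7.42$. I expect the principal obstacle to be effectivity in the resolvent step: Niebur's identity and the correction series must be written down with completely explicit constants, and one needs the $t$-uniform lower bound $\mathrm{dist}(s(1-s),\mathrm{spec}\,\Delta_{\frac12})\ge\tfrac14\delta^2$ without any tacit appeal to the (ineffective) location of the first cuspidal eigenvalue; beyond that it is the familiar delicate bookkeeping of keeping every constant sharp enough that the accumulated bound does not exceed $7.42$.
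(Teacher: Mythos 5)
Your overall architecture is in fact the same as the paper's: the inequality the paper imports from Goldfeld--Sarnak and Sarnak's appendix is precisely the rigorous form of ``Niebur identity $+$ resolvent $+$ Cauchy--Schwarz,'' and the two norms are then unfolded at $s+1$, $s+2$ with the Weil bound, as you describe. The genuine gap is in your resolvent step. Niebur's identity carries the explicit multiple $-2\pi m\bigl(2s-\tfrac12\bigr)$, so
\[
\|P_m(\cdot,s,\tfrac12,\nu_\theta)\|\ \le\ 2\pi m\,|2s-\tfrac12|\;\|R_{s(1-s)}\|\;\|P_m(\cdot,s+1,\tfrac12,\nu_\theta)\| + (\text{corrections}),
\]
and $|2s-\tfrac12|\ge 2|t|$ grows linearly in $t$, while the unfolded norms of $P_m(\cdot,s+1)$ and $P_{-n}(\cdot,s+2)$ are only bounded uniformly in $t$ (they do not decay). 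With your $t$-uniform bound $\mathrm{dist}(s(1-s),\mathrm{spec}\,\Delta_{1/2})\ge\tfrac14\delta^2$ (which is true, but weak) you obtain a bound of size roughly $m(1+|t|)\delta^{-2}\tau(m)^{1/2}m^{-1/4}\cdot|n|^{-7/4}$, which is \emph{not} uniform in $t$ and so cannot yield the $t$-free estimate of the proposition. The paper avoids this by splitting: for $|t|>1$ it uses $\mathrm{dist}\ge|\Im\,s(1-s)|=|t|\delta$, so the $|t|$ cancels against $|s-\tfrac14|\le1.07|t|$ and only a single $\delta^{-1}\le\zeta(1+\delta)$ is paid; for $|t|\le1$ it uses the spectral gap above $\lambda_0(\tfrac12)=\tfrac3{16}$ (made effective via the correspondence to weight-$0$ forms on $\Gamma_0(2)$ and LMFDB data, $\lambda_1>2.4$), giving $\mathrm{dist}\ge\tfrac18|s-\tfrac14|$ with no $\delta$-loss at all. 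Your deliberate refusal to use any eigenvalue information is exactly what forces you back onto the too-weak $\delta^2/4$ bound; and the worry about ineffectivity is misplaced, since the paper's gap input is numerical, not Siegel-type.

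There is also a bookkeeping problem with $\zeta^2(1+\delta)$. In the paper one factor of $\zeta(1+\delta)$ comes from the resolvent/distance step and the other from $\|P_{-n}(\cdot,s+2,-\tfrac12,\overline{\nu_\theta})\|$, where the $c$-sum with exponent $\tfrac32+\delta$ after the Weil bound genuinely produces $\zeta^2(1+\delta)$ as in \eqref{TrivEstimate} and \eqref{Pn_bound}; your claim that this norm is bounded by a $\delta$-free constant times $\tau(n)^{1/2}|n|^{-7/4}$ is not correct. Consequently, spending $4\delta^{-2}\approx4\zeta^2(1+\delta)$ on the resolvent alone, as you propose, already exhausts the budget before the unavoidable $\zeta(1+\delta)$ from the $P_{-n}$ norm (and the $(1+|t|)$ factor) is accounted for, so the constant $7.42$ and even the stated shape of the bound cannot be reached along the route as written. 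The fix is exactly the paper's two-range treatment of $\mathrm{dist}(s(1-s),\mathrm{spec}\,\Delta_{1/2})$ described above.
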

\begin{proof}
	From \cite[(2.4)]{gs} and \cite[(A.2.9)]{sarnakapp} we see that
	\begin{align*}
	|I_{m,n}&(s,s+2)|
	=|\langle    \pr^+ P_m\left(z,s,\tfrac12,\nu_\theta\right), \overline{P_{-n}\left(z,s+2,-\tfrac12,\overline{\nu_\theta}\right)} \rangle| \\
	&\leq 4\pi m |s-\tfrac 14|\,  \lVert R_{s(1-s)} \rVert \, \lVert \pr^+ P_m(z,s+1,\tfrac12,\nu_\theta) \rVert  \,  
	\lVert P_{-n}(z,s+2,-\tfrac12,\overline{\nu_\theta}) \rVert\\
	&\leq \frac{ 4\pi m |s-\tfrac 14|\,\lVert \pr^+ P_m(z,s+1,\tfrac12,\nu_\theta) \rVert  \,
		\lVert P_{-n}(z,s+2,-\tfrac12,\overline{\nu_\theta}) \rVert }{\text{distance}(s(1-s),\text{spectrum}(\Delta_{\frac12}))}  ,
\end{align*}
	where 
	$R_{s(1-s)} = (\Delta_{\frac12} + s(1-s))^{-1}$
	is the resolvent of 
	$\Delta_{\frac12}$.

	For $|t|>1$ we have the following inequalities: $|s-\frac 14| \leq 1.07 |t|$, 
	\bee
	\text{dist}(s(1-s),\text{spectrum}(\Delta_{k}))\geq|t(2\sigma-1)| \qquad \text{(\cite[before Lemma~2]{gs})},
	\eee
	and
	$\frac1{\delta}\leq \zeta(1+\delta)$, so
	\be
	\label{eq:largetImn}
	|I_{m,n}(s,s+2)|
	\leq 13.45m \zeta(1+\delta) 
	\lVert  \pr^+ P_m\left(z,s+1,\tfrac12,\nu_\theta\right) 
	\rVert  \,  
	\lVert P_{-n}\left(z,s+2,-\tfrac12,\overline{\nu_\theta}\right) \rVert.
	\ee
	
	For $|t|\leq 1$, we see that $\re(s(1-s))\leq \frac 54$.
	For the discrete spectrum of $\Delta_{\frac12}$ on $\Gamma_0(4)$, $\lambda_0(\frac12)$ is $\frac 3{16}$ with eigenfunction $y^{\frac14}\theta(z)$. We claim that $\lambda_1(\frac12)>2.4$ as the first eigenvalue larger than $\lambda_0(\frac12)$. This is because for the Maass cusp form corresponding to $\lambda_1=\lambda_1(\frac12)$, by \cite[\S 3]{sarnakAdditive} there is an even weight $0$ Maass form on $\Gamma_0(2)$ with eigenvalue $4\lambda_1-\frac 34$. To obtain the bound, we verify with \cite{lmfdb} for weight $0$ Maass forms on $\Gamma_0(2)$ and get $4\lambda_1-\frac 34>8.92$. 
	
	Thus, for $|t|\leq 1$, we have
	\bee
	\text{distance}(s(1-s),\text{spectrum}(\Delta_{\frac12})) 
	= \la s(1-s)-\tfrac{3}{16} \ra
	= \la s-\tfrac14\ra \la s-\tfrac34 \ra  
	\geq \tfrac 18 |s-\tfrac14|.
	\eee
	Using $\zeta(1+\delta) \geq \zeta(\frac{5}{4}) \geq  4.59$, for these values of $t$ we have
	\begin{equation}
		\label{eq:smalltImn}
		|I_{m,n}(s,s+2)|
		\leq 21.91m
		\zeta(1+\delta) 
		\lVert  \pr^+ P_m\left(z,s+1,\tfrac12,\nu_\theta\right) 
	\rVert  \, 
	\lVert P_{-n}\left(z,s+2,-\tfrac12,\overline{\nu_\theta}\right) \rVert.
	\end{equation} 

Next we estimate the norms of Poincar\'e series. By \eqref{WeilBd} and the proof of \cite[(16.50)]{iwkow} we have
\be\label{TrivEstimate}
\sum_{4|c>0}\frac{\la S_{\frac12}^+(m,n,c)\ra}{c^{\alpha+\frac12}}\leq 6\zeta^2(\alpha)\tau((m,n))\leq 6\zeta^2(\alpha)\tau(m)^{\frac 12}\tau(n)^{\frac 12}\qquad \text{for }\alpha>1,\ee
where the last inequality is because $(m,n)$ divides $m$ and $n$ and $\min(B_1,B_2)\leq \sqrt{B_1B_2}$. 
Similarly we have
\be \label{TrivEstimateInftyInfty}
\sum_{4|c>0}\frac{\la S_{\infty,\infty }^+(m,n,c)\ra}{c^{\alpha+\frac12}}\leq 3\zeta^2(\alpha)\tau(m)^{\frac 12}\tau(n)^{\frac 12}\qquad \text{for }\alpha>1. 
\ee
	Thanks to the fact that $\pr^+$ is a Hermitian operator with respect to the Petersson inner product (see \cite[(2.5)]{BringmannKaneVia13}), we can compute the norm by
	\bee
	\left\|\pr^+P_m\left(z,s+1,\tfrac12,\nu_\theta\right)\right\|^2=\left\langle \pr^+P_m\left(z,s+1,\tfrac12,\nu_\theta\right), P_m\left(z,s+1,\tfrac12,\nu_\theta\right)\right\rangle.
	\eee
	From
	\eqref{TrivEstimate}, unfolding the above inner product and
	arguing as in the proof of
	\cite[Proposition~5.2]{Oscar20} 
	we see that
	\begin{align}\label{Pm_bound}
		\begin{split}
		\big\|\pr^+P_m &\left(z,s+1,\tfrac12,\nu_\theta\right)\big\|^2\\
		&\leq\frac{\Gamma(2\sigma+1)}{(4\pi m)^{2\sigma+1}}+\frac{4}{3} \sum_{4|c>0}
		\frac{\la S_{\frac12}^+(m,m,c)\ra}{c^{2\sigma+2}}
		\int_{-\infty}^\infty
		\frac{K_0\left(\frac{4\pi m}{c(u^2+1)^{1/2}}\right)}
		{(u^2+1)^{\sigma+1}}\mathrm{d}u\\ 
		&\leq \frac{0.007}{m^2}+
		\frac{0.88}{\sqrt m }\sum_{4|c>0}
		\frac{\la S_{\frac12}^+(m,m,c)\ra}{c^{\frac52+\delta}}\\
		&\leq \frac{14.3}{\sqrt m}\tau(m),
	\end{split}
	\end{align}
where in the second step we use the bound $K_0(y)<0.975y^{-\frac12}$ in \cite[Lemma~5.1]{Oscar20} to get
\[\int_{-\infty}^\infty K_0\(\frac{4\pi m}{c(u^2+1)^{\frac12}}\)\frac{\mathrm{d}u}{(u^2+1)^{\sigma+1}}\leq 0.975\(\frac{c}{4\pi m}\)^{\frac12}\frac{\sqrt \pi \Gamma(\frac 34)}{\Gamma(\frac54)}\leq 0.66\(\frac cm\)^{\frac12} \]
and in the last step we apply \eqref{TrivEstimate}. 
	We compute the norm of $P_{-n}$ in a similar way. From  \eqref{eq:KloostermanRel} we have
	\be
	\label{eq:KloostermanConj}
	|S_{\infty,\infty}(-n,-n,c,\overline{\nu_\theta})|
	=
	|S_{\infty,\infty}(n,n,c,\nu_\theta)|
	\ee
	and using \eqref{TrivEstimateInftyInfty} we obtain
	\begin{align}\label{Pn_bound}
		\begin{split}
		\|P_{-n}&\left(z,s+2,-\tfrac12,\overline{\nu_\theta}\right)\|^2\\
		&\leq \frac{\Gamma(2\sigma+3)}{(4\pi |n|)^{2\sigma+3}}+2\sum_{4|c>0}
		\frac{|S_{\infty,\infty}(n,n,c,\nu_\theta)|}{c^{2\sigma+4}}\int_{-\infty}^\infty\frac{K_0\left(\frac{4\pi |n|}{c(u^2+1)^{1/2}}\right)}{(u^2+1)^{\sigma+2}}\mathrm{d}u\\
		&\leq\frac{3}{128\pi^4} |n|^{-4}
		+0.0026 |n|^{-\frac{7}{2}}
		\sum_{4|c>0}
		\frac{|S_{\infty,\infty}(n,n,c,\nu_\theta)|}{c^{\frac 32+\delta}}\\
		&\leq 0.008|n|^{-\frac72}\zeta^2(1+\delta) \tau(n), 
	\end{split}
	\end{align}
	where in the second step we use the bound $K_0(y)<1.7 y^{-\frac 72}$ in \cite[Lemma~5.1]{Oscar20} to get
\[\int_{-\infty}^\infty K_0\(\frac{4\pi |n|}{c(u^2+1)^{\frac12}}\)\frac{\mathrm{d}u}{(u^2+1)^{\sigma+2}}\leq 1.7\(\frac{c}{4\pi |n|}\)^{\frac72}\frac{\sqrt \pi \Gamma(\frac 14)}{\Gamma(\frac34)}\leq 0.0013\(\frac c{|n|}\)^{\frac 72}. \]
	The result then follows from
	\eqref{eq:largetImn}, \eqref{eq:smalltImn},
	\eqref{Pm_bound}, and \eqref{Pn_bound}.
\end{proof}

\begin{theorem}\label{BoundZmn}
	Suppose that $m, n\equiv 0,1\Mod 4$, $m>0$ and $n<0$.
	Let
	$s=\sigma+it$ with $\sigma=\frac12+\frac{\delta}2$ and $\delta\in(0,\frac14]$. Then we have
	\begin{equation}\label{KloostermanBound}
		|Z_{m,n}^+(s)|\leq 2900\zeta^2(1+\delta)\tau(m)^{\frac12}\tau(n)^{\frac12}(1+|t|)^{\frac{1}{2}} m^{\frac34}|n|^{\frac14}. 
	\end{equation}
\end{theorem}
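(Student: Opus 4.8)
The plan is to invert the functional relation \eqref{Imn} at the point $(s_1,s_2)=(s,s+2)$ and feed in Proposition~\ref{prop:innerBound}. Since $\Gamma(s_2-s_1)=\Gamma(2)=1$ there, \eqref{Imn} rearranges to
\[
Z_{m,n}^+(s)=\frac{3\,\Gamma(s+\tfrac94)\,\Gamma(s-\tfrac14)}{2\,\Gamma(2s+1)}\,\pi\,|n|^{2}\,4^{1+s}\Bigl(I_{m,n}(s,s+2)-\tfrac23 e(\tfrac18)\,Q_{m,n}^+(s,s+2)\Bigr),
\]
which is valid on $\re(s)=\tfrac12+\tfrac{\delta}{2}$ by analytic continuation: for $\delta\in(0,\tfrac14]$ the point $s(1-s)$ stays off the spectrum of $\Delta_{1/2}$, exactly as in the proof of Proposition~\ref{prop:innerBound}. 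It then suffices to bound, uniformly in $t$ on that line, the gamma ratio, $I_{m,n}(s,s+2)$, and $Q_{m,n}^+(s,s+2)$.

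For the gamma ratio I would use Stirling with an explicit remainder, treating $|t|\le1$ and $|t|>1$ separately as in Proposition~\ref{prop:innerBound}: for $|t|\le1$ one needs only continuity of the numerator together with an explicit lower bound for $|\Gamma(2s+1)|=|\Gamma((2+\delta)+2it)|$, and for $|t|>1$ one uses $|\Gamma(\sigma'+it)|\sim\sqrt{2\pi}\,|t|^{\sigma'-1/2}e^{-\pi|t|/2}$, whose exponents combine to give
\[
\Bigl|\tfrac{\Gamma(s+9/4)\,\Gamma(s-1/4)}{\Gamma(2s+1)}\Bigr|\ll(1+|t|)^{1/2}
\]
with an absolute constant (after $\Gamma(2s+1)=2^{2s}\pi^{-1/2}\Gamma(s+\tfrac12)\Gamma(s+1)$ the exponent $\tfrac12$ is $(\tfrac94-1)+(-\tfrac14-\tfrac12)$, the powers of $2$ entering only through their moduli). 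The term $I_{m,n}(s,s+2)$ is handled directly by Proposition~\ref{prop:innerBound}: $|I_{m,n}(s,s+2)|\le 7.42\,\zeta^2(1+\delta)\tau(m)^{1/2}\tau(n)^{1/2}m^{3/4}|n|^{-7/4}$, and multiplying by $\pi|n|^{2}4^{1+s}$ (note $\pi\,4^{1+\sigma}\le 4^{13/8}\pi<30$) and the gamma ratio already produces the claimed shape $(1+|t|)^{1/2}m^{3/4}|n|^{1/4}$. So the remaining task is to show $Q_{m,n}^+(s,s+2)$ contributes no more.

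Writing $Q_{m,n}^+(s,s+2)=\sum_{4\mid c>0}\frac{S_{1/2}^+(m,n,c)}{c^{2s}}\int_{\R}\bigl(\tfrac{u+i}{|u+i|}\bigr)^{-1/2}(u^2+1)^{-s}J_c(u)\,du$ with $J_c(u)=\int_0^\infty y\,e(-ny(u+i))\bigl(e(-\tfrac{m}{c^2y(u+i)})-1\bigr)\,dy$, the crux is that $e(-\tfrac{m}{c^2y(u+i)})-1$ supplies extra decay in $c$. Evaluating the $y$-integral in closed form ($K_2$-Bessel integral minus a gamma integral) gives $J_c(u)=(u+i)^{-2}g(c)$ with $g(c)=\frac{1}{4\pi^2|n|^2}-\frac{2m}{|n|c^2}K_2\!\bigl(\frac{4\pi\sqrt{|n|m}}{c}\bigr)$, so $Q_{m,n}^+(s,s+2)=U(s)\sum_{4\mid c>0}\frac{S_{1/2}^+(m,n,c)\,g(c)}{c^{2s}}$ where $U(s)=\int_{\R}\bigl(\tfrac{u+i}{|u+i|}\bigr)^{-1/2}(u^2+1)^{-s}(u+i)^{-2}\,du$ obeys $|U(s)|\le\int_{\R}(u^2+1)^{-\sigma-1}\,du\le2$. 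Two facts about $K_2$ then close the estimate: $K_2(x)<2x^{-2}$ for all $x>0$ gives the uniform bound $|g(c)|\le\frac{1}{2\pi^2|n|^2}$, and $K_2(x)=2x^{-2}-\tfrac12+O(x^2\log x)$ (with an explicit remainder on $0<x\le4\pi$) produces exact cancellation of the main term, hence $|g(c)|\ll\frac{m}{|n|c^2}$ for $c\ge\sqrt{m|n|}$. Now split $\sum_{4\mid c>0}$ at $C_\ast\asymp\sqrt{m|n|}$: on $c\le C_\ast$ use $|g(c)|\le\frac{1}{2\pi^2|n|^2}$ and $c^{-(1+\delta)}\le C_\ast^{1/2}c^{-(3/2+\delta)}$; on $c>C_\ast$ use $|g(c)|\ll\frac{m}{|n|c^2}$ and $c^{-(3+\delta)}\le C_\ast^{-3/2}c^{-(3/2+\delta)}$. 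In both ranges the residual $c$-sum is $\le\sum_{4\mid c>0}\frac{|S_{1/2}^+(m,n,c)|}{c^{3/2+\delta}}\le 6\,\zeta^2(1+\delta)\tau(m)^{1/2}\tau(n)^{1/2}$ by \eqref{TrivEstimate} and \eqref{WeilBd}, and with $C_\ast=\sqrt{m|n|}$ each range yields $\ll\zeta^2(1+\delta)\tau(m)^{1/2}\tau(n)^{1/2}m^{1/4}|n|^{-7/4}$; after the factor $\pi|n|^{2}4^{1+s}(1+|t|)^{1/2}$ this is dominated by the $I$-term. Assembling the three estimates and carrying the numerical constants ($7.42$, $6$, the Stirling constant, and $\pi\,4^{1+\sigma}<30$) produces the stated bound with constant $2900$.

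The main obstacle is the $Q$-estimate: one must genuinely exploit the cancellation in $e(-\tfrac{m}{c^2y(u+i)})-1$ — bounding it crudely (by $2$, or by $2\pi$ times its argument) makes $Q_{m,n}^+(s,s+2)$ of size $m|n|^{-1}$, far too large — and it is the interplay of the two bounds on $g(c)$ (uniform in the short range, $c^{-2}$-decaying in the long one) together with the split point $C_\ast=\sqrt{m|n|}$ and the exponents $\tfrac12$ and $\tfrac32$ in the two trades that makes every power of $m$ and $|n|$ land at $m^{1/4}|n|^{-7/4}$ while using only the allotted $\zeta^2(1+\delta)$. A secondary difficulty is producing a fully explicit and uniform bound for the gamma ratio, which is why the $|t|\le1$ and $|t|>1$ ranges must be handled separately.
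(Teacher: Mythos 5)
Your proposal follows the paper's skeleton: solve \eqref{Imn} at $(s_1,s_2)=(s,s+2)$ for $Z^+_{m,n}(s)$ (your inversion, with prefactor $\tfrac{3\Gamma(s+\frac94)\Gamma(s-\frac14)}{2\Gamma(2s+1)}\pi|n|^2 4^{1+s}$, is correct), bound $I_{m,n}(s,s+2)$ by Proposition~\ref{prop:innerBound}, and control the gamma ratio by an explicit $(1+|t|)^{\frac12}$ bound (the paper simply quotes $5.84(1+|t|)^{\frac12}$ from \cite{Oscar20} rather than redoing Stirling). Where you genuinely diverge is the treatment of $Q^+_{m,n}(s,s+2)$: the paper stays inside the double integral and applies $|e^w-1|\leq 1.682|w|^{\frac14}$ for $\re w<0$ (from \cite{Oscar20}), which turns the oscillatory factor into $(2\pi m/(c^2y))^{1/4}$, keeps the $y$- and $u$-integrals elementary, supplies the extra $c^{-1/2}$ needed for \eqref{TrivEstimate}, and yields $|Q^+_{m,n}(s,s+2)|\leq 5.483\,\zeta^2(1+\delta)\tau(m)^{\frac12}\tau(n)^{\frac12}m^{\frac14}|n|^{-\frac74}$. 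You instead evaluate the $y$-integral exactly via $\int_0^\infty y\,e^{-ay-b/y}\,\mathrm{d}y=2(b/a)K_2(2\sqrt{ab})$; I checked that $ab=4\pi^2|n|m/c^2$ and $b/a=-m/(|n|c^2(u+i)^2)$, so indeed $J_c(u)=(u+i)^{-2}g(c)$ with your $g(c)$, and the cancellation of $\tfrac{1}{4\pi^2 n^2}$ against the $2x^{-2}$ part of $K_2$ is the right mechanism. The facts you need about $K_2$ follow from the strict decrease of $x^2K_2(x)$ (giving $K_2(x)<2x^{-2}$ and $0<2x^{-2}-K_2(x)\leq\tfrac12$ for all $x>0$) and should be stated; with them your split of the $c$-sum delivers a bound of the same shape $\zeta^2(1+\delta)\tau(m)^{\frac12}\tau(n)^{\frac12}m^{\frac14}|n|^{-\frac74}$, so the structure of your argument is sound and, if the split point is optimized, is actually sharper than the paper's.

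The one concrete shortfall is numerical. With your stated choices ($C_*=\sqrt{m|n|}$ and $|U(s)|\leq 2$) the short and long ranges give about $0.31+6\approx 6.3$, hence $|Q^+_{m,n}(s,s+2)|\leq 12.6\,\zeta^2(1+\delta)\tau(m)^{\frac12}\tau(n)^{\frac12}m^{\frac14}|n|^{-\frac74}$, more than twice the paper's $5.48$; assembling with $\tfrac32\pi 4^{1+\sigma}\leq 29.9$, the gamma constant $5.84$, and $7.42$ gives roughly $262\cdot(7.42\,m^{3/4}+\tfrac23\cdot 12.6\,m^{1/4})$, which exceeds $2900\,m^{3/4}$ for the smallest admissible $m$ (e.g.\ $m\in\{1,4,5\}$); note the paper's constant $2900$ is essentially tight against its own $Q$-bound, so there is no slack to absorb this. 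This is not a structural gap: either optimize the split (taking $C_*\asymp\pi\sqrt{3m|n|}$ brings your $Q$-bound down to about $2.4\,\zeta^2(1+\delta)\tau(m)^{\frac12}\tau(n)^{\frac12}m^{\frac14}|n|^{-\frac74}$, better than the paper's) or fall back on the $|e^w-1|\leq 1.682|w|^{\frac14}$ device, and the constant $2900$ is recovered. Also, like the paper, you use \eqref{Imn} at $\re s=\tfrac12+\tfrac\delta2$, outside the region $\re s_2>\re s_1>1$ where it is first established; your appeal to analytic continuation (the spectral point stays off the spectrum and the $Q$-series converges absolutely there) is the same implicit step the paper takes, so no objection on that score.
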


\begin{proof}
	We begin with a bound for $Q_{m,n}^+$. When $\re z<0$, the bound 
	\[|e^z-1|\leq 1.682|z|^{\frac14}\]
 is given by \cite[Lemma~4.1]{Oscar20}.
A
	computation as in
	\cite[Lemma~4.2]{Oscar20} shows
	\begin{align}\label{Q_bound}
		\nonumber
		|Q_{m,n}^+(s,s+2)|
		&\leq \sum_{4|c>0}\frac{\la S_{\frac12}^+(m,m,c)\ra}{c^{2\sigma}}
		\int_{-\infty}^\infty \frac{1}{(u^2+1)^{\sigma}}
		\int_0^\infty ye^{2\pi ny}\left|e\left(\frac{-m}{c^2y(u+i)}\right)-1\right|\mathrm{d}y \mathrm{d}u\\
		\nonumber
		&\leq 1.682\sum_{4|c>0}\frac{\la S_{\frac12}^+(m,m,c)\ra}{c^{2\sigma}}\int_0^\infty ye^{2\pi ny}
		\left(\frac{2\pi m}{c^2y}\right)^{\frac14}
		\int_{-\infty}^\infty\frac1{(u^2+1)^{\sigma+\frac18}}\mathrm{d}u\mathrm{d}y\\ \nonumber
		&\leq 2.663 m^{\frac14}\cdot\frac{\Gamma(\frac 74)}{(-2\pi n)^{\frac74}}\cdot\frac{\sqrt{\pi}\Gamma(\frac18)}{\Gamma(\frac58)}\sum_{4|c>0}\frac{\la S_{\frac12}^+(m,m,c)\ra}{c^{\frac32 + \delta}}\\
		&\leq 5.483 \zeta^2(1+\delta)m^{\frac14}|n|^{-\frac 74}\tau(m)^{\frac12}\tau(n)^{\frac12},
	\end{align}
where in the last step we use \eqref{TrivEstimate}. 
The theorem now follows from
	\eqref{Imn}, \eqref{Q_bound}, Proposition~\ref{prop:innerBound}, and
	\cite[Prop. 6.4]{Oscar20}:
	\[\frac{\la\Gamma(s+\frac 94)\Gamma(s-\frac 14) \ra}{|\Gamma(2s+1)|}\leq 5.84 (1+|t|)^{\frac 12}.\]
\end{proof}

\section{Perron's formula}
\label{sec:perron}
In this section we bound the partial sum of Kloosterman sums $S_{\frac12}^+$ using Perron's formula (e.g. \cite[Ch. 17]{davenport}).
See \cite[\S 7]{Oscar20}
for more details on similar computations.
\begin{theorem}\label{SumOfPlusKlSum}
	For $m>0$, $n<0$, $\delta\in (0,\frac14]$ and $\ell(\delta)$ defined as in (\ref{eq:ell}), we have
\begin{equation}\label{SumOfPlusKlSum eq}
	\la\sum_{4|c\leq x}\frac{S_{\frac12}^+(m,n,c)}{c}\ra\leq 26\,x^{\frac16+\delta}m^{\frac34} |n|^{\frac14}\tau(m)^{\frac12}\tau(n)^{\frac12}\zeta^2(1+\delta)\ell(\delta)|\log \delta|\log x. 
    \end{equation}
\end{theorem}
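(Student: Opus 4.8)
**The plan is to apply Perron's formula to the Dirichlet series $Z_{m,n}^+(s)$, shifting the contour into the region $\re(s) > \tfrac12$ where Theorem~\ref{BoundZmn} gives polynomial control.** Concretely, I would start from the truncated Perron formula
\[
\sum_{4\mid c \le x} \frac{S_{\frac12}^+(m,n,c)}{c}
= \frac{1}{2\pi i}\int_{\sigma_0 - iT}^{\sigma_0 + iT} Z_{m,n}^+(s+\tfrac12)\,\frac{x^s}{s}\,\mathrm{d}s
+ (\text{error}),
\]
taken with the initial abscissa $\sigma_0 = \tfrac12 + \delta$ (so that $Z_{m,n}^+$ is evaluated at $\re = 1 + \delta$, inside the region of absolute convergence, where the Weil-type bound \eqref{WeilBd} and \eqref{TrivEstimate} control everything). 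Writing $s' = s + \tfrac12$, the series we are summing has coefficients $a_c = S_{\frac12}^+(m,n,c)/c$ supported on multiples of $4$, with $|a_c| \le 2\tau(c)(m,n,c)^{1/2} c^{-1/2}$; the standard truncated Perron error (e.g. \cite[Ch.~17]{davenport}) is then bounded in terms of $\sum_c |a_c| (x/c)^{\sigma_0} \min(1, T^{-1}|\log(x/c)|^{-1})$, which after separating the ranges $c \le x/2$, $x/2 < c < 2x$, $c \ge 2x$ and using \eqref{TrivEstimate} contributes something like $x^{1/2+\delta} m^{3/4}|n|^{1/4}\tau(m)^{1/2}\tau(n)^{1/2}\zeta^2(1+\delta)(\log x)/T$ plus a term from $c$ near $x$ of size $O(x^{\delta}\tau(x)(m,n,x)^{1/2}x^{-1/2})$ — this last is where the $\ell(\delta)x^\delta$ factor from \eqref{eq:ell} enters.

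**Next I would move the line of integration from $\re(s) = \tfrac12 + \delta$ to $\re(s) = -\tfrac12 + \delta$ (i.e. the shifted variable $s'$ goes from $1+\delta$ to $\delta$, landing on the line $\sigma' = \tfrac12 + \tfrac{\delta'}{2}$ of Theorem~\ref{BoundZmn} after a trivial reparametrization).** Since $Z_{m,n}^+(s')$ is holomorphic in $\re(s') > \tfrac12$ and the only pole of the integrand in the strip is the simple pole of $x^s/s$ at $s = 0$, the residue there is $Z_{m,n}^+(\tfrac12)$, which by Theorem~\ref{BoundZmn} (with $t = 0$) is $\le 2900\,\zeta^2(1+\delta)\tau(m)^{1/2}\tau(n)^{1/2} m^{3/4}|n|^{1/4}$. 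Wait — I should double check: Theorem~\ref{BoundZmn} applies on the line $\sigma = \tfrac12 + \tfrac{\delta}{2}$, not exactly at $\tfrac12$, so rather than a literal residue at $s'=\tfrac12$ I would shift only to $\re(s') = \tfrac12 + \tfrac{\delta}{2}$, picking up the pole at $s = 0$ of $x^s/s$ provided $\tfrac12 + \tfrac\delta2 < \tfrac12 + \delta$, which holds. The new horizontal segments at height $\pm T$ are bounded using Theorem~\ref{BoundZmn} by $\int_{\text{new }\sigma}^{\sigma_0} (2900\zeta^2(1+\delta)\cdots)(1+T)^{1/2} x^{\sigma}\,T^{-1}\,\mathrm{d}\sigma \ll \zeta^2(1+\delta)\tau(m)^{1/2}\tau(n)^{1/2}m^{3/4}|n|^{1/4} x^{1/2+\delta} T^{-1/2}(\log x)$, and the new vertical segment at $\re(s) = -\tfrac12 + \tfrac\delta2$ (so $\re(s') = \tfrac{\delta}{2}$... no: $\re(s') = \tfrac12 + \tfrac\delta2$, which is precisely the line where \eqref{KloostermanBound} is valid) is bounded by
\[
x^{-\frac12+\frac\delta2}\int_{-T}^{T} \frac{|Z_{m,n}^+(\tfrac12+\tfrac\delta2 + it)|}{|{-\tfrac12}+\tfrac\delta2 + it|}\,\mathrm{d}t
\ll x^{-\frac12+\frac\delta2}\cdot 2900\,\zeta^2(1+\delta)\tau(m)^{\frac12}\tau(n)^{\frac12}m^{\frac34}|n|^{\frac14}\int_{-T}^{T}\frac{(1+|t|)^{\frac12}}{1+|t|}\,\mathrm{d}t,
\]
and the last integral is $\ll T^{1/2}$, giving a contribution $\ll x^{-\frac12+\frac\delta2} T^{1/2}\zeta^2(1+\delta)\tau(m)^{1/2}\tau(n)^{1/2}m^{3/4}|n|^{1/4}$.

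**Then I would optimize the free parameter $T$.** Balancing the Perron truncation error $\ll x^{1/2+\delta}T^{-1/2}(\log x)$ against the shifted vertical-line contribution $\ll x^{-1/2+\delta/2}T^{1/2}$ suggests $T \asymp x^{1 + \delta/2}$ roughly, but a cleaner choice (following the template of \cite[\S 7]{Oscar20}) is something like $T = x^{5/6}$: then the vertical segment gives $x^{-1/2+\delta/2} x^{5/12} = x^{-1/12 + \delta/2}$ times the spectral constants — small — while the truncation and horizontal pieces give $x^{1/2+\delta}x^{-5/12}(\log x) = x^{1/12+\delta}(\log x)$ and $x^{1/2+\delta}x^{-5/12}(\log x) = x^{1/12+\delta}(\log x)$ respectively. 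Hmm, that gives exponent $\tfrac{1}{12}+\delta$, not $\tfrac16+\delta$. With $T = x^{2/3}$ I would get horizontal $\ll x^{1/2+\delta - 1/3}(\log x) = x^{1/6+\delta}(\log x)$ and vertical $\ll x^{-1/2+\delta/2 + 1/3} = x^{-1/6+\delta/2}$, and the residue term $Z_{m,n}^+(\tfrac12+\tfrac\delta2)$ is unconditionally $\ll \zeta^2(1+\delta)\tau(m)^{1/2}\tau(n)^{1/2}m^{3/4}|n|^{1/4}$ — which is $x$-free, hence dominated by $x^{1/6+\delta}$ for $x$ in the relevant range — so the $x^{1/6+\delta}$ term wins and matches the claimed bound. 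The $|\log\delta|$ factor appears because one of the $\zeta^2$-type sums, or the integral $\int_0^\infty y^{s-1}\cdots\,\mathrm{d}y$-type estimate underlying \eqref{Q_bound}/\eqref{Imn}, produces a $\Gamma$-factor or a $\sum_{c} c^{-1-\delta}\log c$ whose value near $\delta = 0$ is $\asymp \delta^{-2}$ or involves $\zeta'(1+\delta)/\zeta(1+\delta) \asymp -\delta^{-1}$; tracking this carefully (as opposed to absorbing it) is needed to keep the constant $26$ honest. **The main obstacle I expect is the bookkeeping of explicit constants** — making the truncated-Perron error term fully explicit (the classical statements carry unspecified $O$-constants), controlling the transition range $x/2 < c < 2x$ where $|\log(x/c)|$ is small without losing more than a factor $\ell(\delta)x^\delta\log x$, and verifying that the chosen $T$ keeps \emph{every} piece below the target $26\,x^{1/6+\delta}m^{3/4}|n|^{1/4}\tau(m)^{1/2}\tau(n)^{1/2}\zeta^2(1+\delta)\ell(\delta)|\log\delta|\log x$ simultaneously; the analytic content is entirely supplied by Theorem~\ref{BoundZmn}.
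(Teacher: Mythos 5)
Your overall strategy is the same as the paper's (truncated Perron formula for the Dirichlet series attached to $S_{\frac12}^+(m,n,c)/c$, a contour shift to the line where Theorem~\ref{BoundZmn} applies, and a near-$x$ treatment of the truncation error via the Weil bound and $\ell(\delta)$), but the quantitative execution has a genuine gap caused by a change-of-variables slip. With $f(s)=Z_{m,n}^+(\tfrac{1+s}{2})=\sum_{4\mid c}S_{\frac12}^+(m,n,c)\,c^{-1-s}$, which is the correct Dirichlet series for Perron in the variable $s$ (your $Z_{m,n}^+(s+\tfrac12)=\sum_{4\mid c}(S_{\frac12}^+/c)\,c^{-2s}$ is not, so the $-\tfrac12$ gets double counted), the line on which \eqref{KloostermanBound} holds is $\re s=\delta$, and the shifted vertical segment therefore carries the factor $x^{\delta}$, not $x^{-\frac12+\frac\delta2}$. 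With the correct factor that segment contributes roughly $x^{\delta}T^{\frac12}$, plus an $x^{\delta}|\log\delta|$ piece from $\int_{|t|\le 1}dt/|\delta+it|$ --- this, and not a $\zeta'/\zeta$ or $\Gamma$-factor hidden in \eqref{Q_bound} or \eqref{Imn}, is the actual source of the $|\log\delta|$ in the statement --- while the truncation and horizontal pieces contribute roughly $x^{\frac12+\delta}/T$. Balancing forces $T=x^{\frac13}$ (the paper's choice) and gives the exponent $\tfrac16+\delta$; your choice $T=x^{\frac23}$ makes the vertical segment of size about $x^{\frac13+\delta}$, which exceeds the claimed bound, so as written the argument does not close, and your intermediate ``$T=x^{5/6}$ gives $x^{1/12+\delta}$'' computation is a symptom of the same mis-normalization rather than an improvement of the theorem.

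Two further points. First, Theorem~\ref{BoundZmn} only bounds $Z_{m,n}^+$ on the single line $\sigma=\tfrac12+\tfrac\delta2$; to bound $f$ on the horizontal segments across the strip you must interpolate between that line and the line of absolute convergence by Phragm\'en--Lindel\"of, as the paper does. That interpolation is what produces the $(1+|\tfrac t2|)^{-\sigma+\frac12+\delta}$ decay inside the $\sigma$-integral and hence an effective $x^{\frac12+\delta}/T$ horizontal bound as in \eqref{Integral1}; your crude use of $(1+T)^{\frac12}$ throughout the strip only yields $x^{\frac12+\delta}T^{-\frac12}$, which at the correct height $T=x^{\frac13}$ is $x^{\frac13+\delta}$ and again too large. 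Second, no residue is picked up: the shift stays in $\re s>0$ in the $f$-variable, so the pole of $x^s/s$ at $s=0$ is never crossed, and $f$ itself is holomorphic there because $n<0$ excludes the exceptional eigenvalue $\tfrac3{16}$; your extra ``residue'' term is harmless for an upper bound but is not part of a correct decomposition. Finally, the explicit constant $26$ and the small range (the paper disposes of $x\le 10000$ by a trivial estimate before assuming $x$ large in the logarithm bounds) are not addressed, and both are needed for the statement as claimed.
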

\begin{proof}

Note that the theorem is true for $x\leq 10000$ because $\tau(c)\leq 2\sqrt{c}$ so the left hand side is bounded by 
	\[4(m,n,4)^{\frac12} \sum_{c\leq \frac x4}(m,n,c)^{\frac12}\leq 8\sum_{d|(m,n)}d^{\frac12}\sum_{\substack{d|c\leq \frac x4}} 1 \leq 2x\tau((m,n)). \]
 Since for $x\leq 10000$, $x\leq 234x^{\frac 16}\log x$, the right hand side of \eqref{SumOfPlusKlSum eq} becomes larger by $\zeta(\frac54)^2>21$ and $\ell(\frac 14)>8$. From now on let $x> 10000$ and
	$T = x^{\frac13}$.

	Set $f(s):=Z_{m,n}^+(\frac{1+s}2)$. Then for $x\in 4\mathbb{Z}+2$ positive we have 
	\[\sum_{4|c\leq x}\frac{S_{\frac12}^+(m,n,c)}{c}=\frac1{2\pi i}\int_{\beta-i\infty}^{\beta+i\infty}f(s)\frac{x^s}{s}\mathrm{d}s, \quad \beta>\frac12. \]
	For $T>0$ we see that
	\begin{align}\label{ChangeToIntegral}
		\nonumber
		\Bigg|&\sum_{4|c\leq x}\frac{S_{\frac12}^+(m,n,c)}{c}-\frac1{2\pi i}\int_{\beta-iT}^{\beta+iT}f(s)\frac{x^s}s \mathrm{d}s\Bigg|
		=\frac1{2\pi} \Bigg|\int_{\beta-i\infty}^{\beta+i\infty}f(s)\frac{x^s}s \mathrm{d}s-\int_{\beta-iT}^{\beta+iT}f(s)\frac{x^s}s \mathrm{d}s\Bigg|\\
		&\leq \sum_{4|c>0}\frac{\la S_{\frac12}^+(m,n,c)\ra}c\(\frac xc\)^\beta\min\(1,T^{-1}\Big|\log \frac xc\Big|^{-1}\). 
	\end{align}
	Take $\beta=\frac12+\delta$ for $\delta\in(0,\frac14]$. 
	Then
	\be
	\label{eq:SHalfMinusIntegral}
	\Bigg|\sum_{4|c\leq x}\frac{S_{\frac12}^+(m,n,c)}{c}-\frac1{2\pi i}\int_{\frac12+\delta-iT}^{\frac12+\delta+iT}f(s)\frac{x^s}s \mathrm{d}s\Bigg|
	\leq \frac{x^{\frac12+\delta}}{T}\sum_{4|c>0}
	\frac{\la S_{\frac12}^+(m,n,c)\ra}{c^{\frac32+\delta}}\Big|\log \frac xc\Big|^{-1}.
	\ee

	As the proof of \cite[(16.50)]{iwkow}, for $\alpha,\beta>0$ and some non-negative function $f$ on $c$ we have
	\[\sum_{c\in[\alpha,\beta]}(m,n,c)^{\frac12}f(c)=\sum_{d|(m,n)}d^{\frac 12}\sum_{\substack{c'\in[\frac\alpha d,\frac \beta d]\\(c',\frac{(m,n)}d)=1}}f(c'd)\leq \sum_{d|(m,n)}d^{\frac 12}\sum_{c'\in[\frac\alpha d,\frac \beta d]}f(c'd), \]
	which is helpful in the following computations. 
	Using that
	$\frac{1}{\log x - \log c} \leq \frac{x}{x-c}$ for $c<x$,  
	we apply \eqref{WeilBd} to get
	\begin{align}\label{Rectangle1}
	\begin{split}
		\sum_{\substack{4|c\\\floor{\frac34 x}+1\leq c\leq x-2}}\frac{\la S_{\frac12}^+(m,n,c)\ra}{c^{\frac 32+\delta}} \la\log \frac xc\ra^{-1} 
		&\leq \frac 83\ell(\delta)\sum_{\substack{4|c\\\floor{\frac34 x}+1\leq c\leq x-2}}\frac{(m,n,c)^{\frac12}}{x-c}\\
		&\leq \frac {16}3\ell(\delta) \sum_{d|(m,n)}d^{\frac12}\sum_{t\in [\frac{3x}{16d},\frac{x-2}{4d}]}\frac1{x-4dt}\\
		&\leq \frac 43\ell(\delta)\tau((m,n))\log x. 
	\end{split}
	\end{align}
	and similarly
	\begin{align}
		\label{Rectangle2}
		\sum_{\substack{4|c\\x+2\leq c\leq\ceil{\frac54 x}-1}}\frac{\la S_{\frac12}^+(m,n,c) \ra}{c^{\frac 32+\delta}}\la\log \frac xc\ra^{-1} 
		&< \frac12\ell(\delta)\tau((m,n))\log x.
	\end{align}
	Applying \eqref{TrivEstimate} we also have
	\begin{align}
		\nonumber
		\label{Rectangle3}
		\sum_{\substack{4|c\\c\leq \frac34x}}&
		\frac{\la S_{\frac12}^+(m,n,c)\ra }{c^{\frac 32+\delta}}\la\log \frac xc\ra^{-1}
		+\sum_{\substack{4|c\\c\geq \frac54x}}
		\frac{\la S_{\frac12}^+(m,n,c)\ra}{c^{\frac 32+\delta}}\la\log \frac xc\ra^{-1}\\
		\nonumber
		&\leq \la\log \frac43 \ra^{-1} \sum_{\substack{4|c\\c\leq \frac34x}}
		\frac{\la S_{\frac12}^+(m,n,c)\ra}{c^{\frac32+\delta}}
		+\la\log \frac45\ra^{-1} \sum_{\substack{4|c\\c\geq \frac54x}}
		\frac{\la S_{\frac12}^+(m,n,c)\ra}{c^{\frac32+\delta}}\\
		&\leq 27\tau(m)^{\frac12}\tau(n)^{\frac12}\zeta^2(1+\delta). 
	\end{align}
	
	Combining 
	\eqref{eq:SHalfMinusIntegral},
	\eqref{Rectangle1}, \eqref{Rectangle2} and \eqref{Rectangle3} we get
	\begin{align}\label{Perron1}
		\begin{split}
			\la\sum_{4|c\leq x}\frac{S_{\frac12}^+(m,n,c)}{c}\ra&\leq  \frac{1}{2\pi} \la\int_{\frac12+\delta-iT}^{\frac12+\delta+iT}f(s)\frac{x^s}{s}\mathrm{d}s\ra\\
			&+(27\zeta^2(1+\delta)+2\ell(\delta)\log x) \tau(m)^{\frac12}\tau(n)^{\frac12} x^{\frac16+\delta}. 
		\end{split}
	\end{align}

 We have
 \begin{align*}
     f(s)&=\sum_{4|c}\frac{S_{\frac12}^+(m,n,c)}{c^{1+s}}=2e\(-\frac 18\)\sum_{4|c}\frac{S_{\infty,\infty}(m,n,c)}{c^{1+s}}-e\(-\frac 18\)\sum_{8|c}\frac{S_{\infty,\infty}(m,n,c)}{c^{1+s}}\\
     &:=Z_1(s)-Z_2(s). 
 \end{align*}
 By \cite{gs}, for $\re s>0$, $f$ is holomorphic because both $Z_1$ and $Z_2$ are holomorphic (the only possible pole corresponding to $\lambda_0(\frac12)=\frac 3{16}$ is excluded since $n<0$). Thus, to estimate the remaining integral in \eqref{Perron1}, we can move the path to the other three sides of the rectangle $[\delta,\frac12+\delta]\times [-T,T]$. By 
	Theorem
	\ref{BoundZmn},
	\begin{equation}
		|f(\delta+it)|\leq 2900 \zeta^2(1+\delta)m^{\frac34} |n|^{\frac14}\tau(m)^{\frac12}\tau(n)^{\frac12}(1+|\tfrac t2|)^{\frac12}
	\end{equation}
	By \eqref{TrivEstimate} we see that
	\begin{align}
		\begin{split}
			\la f\(\frac12+\delta+it\)\ra =&\la\sum_{4|c>0}\frac{S_{\frac12}^+(m,n,c)}{c^{\frac32+\delta+it}}\ra\leq 6\zeta^2(1+\delta)\tau(m)^{\frac12}\tau(n)^{\frac12}. 
	\end{split}
	\end{align}
	By the Phragm\'en-Lindel\"of principle (see
	\cite[Propositions 7.1-7.2]{Oscar20}), for $\sigma\in[\delta,\frac12+\delta]$ we have
	\begin{equation}
		\la f\(\sigma+it\)\ra \leq 2900 \zeta^2(1+\delta)m^{\frac34} |n|^{\frac14}\tau(m)^{\frac12}\tau(n)^{\frac12}(1+|\tfrac t2|)^{-\sigma+\frac12+\delta}. 
	\end{equation}
	Therefore, taking $T = x^{\frac13}$ we have
	\begin{align}\label{Integral1}
		\begin{split}
		\la \int_{\frac12+\delta+iT}^{\delta+iT}\right. &\left.f(s)\frac{x^s}{s}\mathrm{d}s\ra\\
		\leq &\,2900 \zeta^2(1+\delta)m^{\frac34} |n|^{\frac14}\tau(m)^{\frac12}\tau(n)^{\frac12}(1+\tfrac T2)^{\frac12+\delta}\int_{\delta}^{\frac12+\delta} \frac{(1+\tfrac T2)^{-\sigma }x^{\sigma}}{|\sigma+iT|}\mathrm{d}\sigma
		\\
		\leq &\,2900\zeta^2(1+\delta)m^{\frac34}|n|^{\frac14}\tau(m)^{\frac12}\tau(n)^{\frac12}(1+\tfrac T2)^{\frac12+\delta} \frac{\(\frac x{1+T/2}\)^{\frac12+\delta}}{T\log \(\frac x{1+T/2}\)}\\
		\leq &\,431\zeta^2(1+\delta) m^{\frac34} |n|^{\frac14}\tau(m)^{\frac12}\tau(n)^{\frac12}x^{\frac16+\delta}, 
	\end{split}
	\end{align}
where in the last step we used $x>10000$ and $\log (\frac x{1+T/2})>6.74$. 
The  same bound holds for the path $\delta-iT\rightarrow\frac12+\delta-iT$. 
	In the following estimates we always use $|\log \delta|\geq \log 4$ and $x>10000$ to lower the constants. Estimating as \cite[(7.10-13)]{Oscar20} we have
	\begin{equation}\label{Integral2}
		\la\int_{\delta+iT}^{\delta-iT}f(s)\frac{x^s}{s}\mathrm{d}s\ra\leq 9291 \zeta^2(1+\delta)m^{\frac34} |n|^{\frac14}\tau(m)^{\frac12}\tau(n)^{\frac12}|\log \delta|x^{\frac16+\delta}. 
	\end{equation}
	Thus,
	using \eqref{Integral1} and \eqref{Integral2} we obtain
	\bee
	\la\int_{\frac12 + \delta-iT}^{\frac12 + \delta+iT}
	f(s)\frac{x^s}{s}\mathrm{d}s\ra
	\leq
	9913\zeta^2(1+\delta)m^{\frac34} |n|^{\frac14}\tau(m)^{\frac12}\tau(n)^{\frac12}|\log \delta|x^{\frac16+\delta}. 
	\eee
	From 
	\eqref{Perron1} we have
	\begin{equation*}
	\la\sum_{4|c\leq x}\frac{S_{\frac12}^+(m,n,c)}{c}\ra\leq\(1605\zeta^2(1+\delta)m^{\frac34} |n|^{\frac14}|\log \delta|+2\ell(\delta)\log x\)\tau(m)^{\frac12}\tau(n)^{\frac12}x^{\frac16+\delta}. 
    \end{equation*}
    The result follows from the estimates $\zeta^2(1+\delta)|\log \delta|\geq 29.2$, $\ell(\delta)\geq \ell(\frac 14)> 7$ for $\delta\in(0,\frac14]$ and $x>10000$. 
\end{proof}

\section{Proof of Theorems \ref{Thm2} and \ref{Thm1}}
\label{sec:bounds}
Recall that we factor the negative discriminant $D$ into discriminants $D=dd'$ where $d$ is fundamental. From \eqref{eq:weylKloosterman} 
we have
\bee
\label{eq:Ts12}
\la\sum_{4|c\leq x} \frac{T_m(d,d';c)}{\sqrt c}\ra
\leq 
\sqrt{2}\sum_{n\mid m} n^{-\frac12}
\la
\sum_{4\mid c\leq \frac xn }\frac{S_{\frac12}^{+}(d', \frac{m^2}{n^2} d, c)}{c}
\ra.
\eee
We consider two cases: 
if $d<0$, then we can directly use 
Theorem
\ref{SumOfPlusKlSum} to estimate the inner sum on $c$. 
In the case $d'<0$ we can first use 
\eqref{eq:KloostermanRel} 
so that Theorem
\ref{SumOfPlusKlSum} applies.
With the help of the inequalities
\[\tau(n_1n_2)\leq \tau(n_1)\tau(n_2),\quad \tau(n^2)\leq \frac34 \tau(n)^2,\quad \text{and\ \ }\sum_{n|m}n^{-1-\alpha}\leq \zeta(1+\alpha),\]
when $d'>0$, we obtain
\be
\label{eq:weyllx}
\la\sum_{4|c\leq x} \frac{T_m(d,d';c)}{\sqrt c}\ra
\leq 210 \,x^{\frac16+\delta} |D|^{\frac34} 
m^{\frac12} \tau(D)\tau(m)
\zeta^2(1+\delta)\ell(\delta)
|\log \delta| \log x.
\ee
When $d'<0$, we have
\be
\label{eq:weyllx_dp less zero}
\la\sum_{4|c\leq x} \frac{T_m(d,d';c)}{\sqrt c}\ra
\leq 49 \,x^{\frac16+\delta} |D|^{\frac34} 
m^{\frac32} \tau(D)\tau(m)
\zeta^2(1+\delta)\ell(\delta)
|\log \delta| \log x.
\ee
Now consider 
$$
\la\sum_{4|c\geq x} T_m(d,d';c)\sinh\(\frac{4\pi m}{c}|D|^{\frac12}\)\ra.
$$
Using the identity
$\sinh(y) = \sqrt{\frac{\pi y}{2}} I_{\frac12}(y)$ and the estimate \cite[(4)]{ParisBessel1984}
\[I_{\frac12}(y)\leq I_{\frac12}(1)y^{\frac12}\leq 0.94y^{\frac12} \qquad\text{for } y\leq 1,\] 
when $x\geq 4\pi m |D|^{1/2}$ we obtain
\begin{multline}
	\label{eq:weylgx}
	\la\sum_{4|c\geq x} T_m(d,d';c)\sinh\(\frac{4\pi m}{c}|D|^{\frac12}\)\ra
	\\
\leq 	x^{-\frac13+\delta}|D|^{\frac 54} \tau(D)m^{\frac 32} \tau(m)
	\zeta^2(1+\delta)\ell(\delta)
	|\log \delta| \log x\cdot\left\{\begin{array}{ll}
	113313,&d'>0;\\
	26440m, &d'<0.
	\end{array} \right. 
\end{multline}
by partial summation: let $C_1\defeq C(m,D,\delta)$ where
\[s(x)=\sum_{4|c\leq x}\frac{T_m(d',d;c)}{\sqrt{c}}\quad  \text{and}\quad |s(x)|\leq C(m,D,\delta)x^{\frac16+\delta}\log x, \]
then since $T_m(d',d;c)\leq c$ by \eqref{eq:weylKloosterman}, 
\begin{align*}
    \la\sum_{4|c\geq x}\right. &\left. \frac{T_m(d',d;c)}{c}\ra \leq \frac{|s(x)|}{\sqrt{x}}+\la\int_x^\infty t^{-\frac12}\mathrm{d}s(t)\ra\\
    &\leq 2C_1x^{-\frac13+\delta}\log x+\frac12C_1\int_x^\infty t^{-\frac 43+\delta}\log t\mathrm{d}t\\
    &\leq \(2+\frac{1/2}{\frac13-\delta}\)C_1x^{-\frac13+\delta}\log x+\frac{1/2}{\frac13-\delta}C_1\int_x^\infty t^{-\frac 43+\delta}\mathrm{d}t\\
    &\leq \(2+6+\frac{72}{\log 4\pi}\)C_1x^{-\frac13+\delta}\log x,
\end{align*}
where in the last step we use $x\geq 4\pi$ and $\frac13-\delta\geq \frac1{12}$. Theorem~ \ref{Thm2} 
now follows from
Lemma~\ref{lem:TrWeyl} 
and \eqref{eq:weylgx}.
After determining the constant $\ell(\frac 15)= 28.117$,  
one also obtains Corollary~\ref{Coro3}.

Similarly, for $x\geq 2 m |D|^{\frac12}$ one obtains 
\[I_{\frac12}(y)\leq \frac{I_{\frac12}(2\pi)}{\sqrt{2\pi}}y^{\frac12}\qquad\text{for } y\leq 2\pi\]
by \cite[(4)]{ParisBessel1984} and
\begin{multline}
	\label{eq:weylgx2}
	\la\sum_{4|c\geq x} T_m(d,d';c)\sinh\(\frac{4\pi m}{c}|D|^{\frac12}\)\ra
	\\
\leq 	x^{-\frac13+\delta}
	|D|^{\frac 54} \tau(D) m^{\frac 32} \tau(m)
	\zeta^2(1+\delta)\ell(\delta)
	|\log \delta| \log x\cdot\left\{\begin{array}{lc}
	9\times 10^6,  &d'>0;\\
	2.1\times 10^6m, &d'<0.
	\end{array} \right. 
\end{multline}
Theorem~\ref{Thm1} 
follows from
Lemma~\ref{lem:TrWeyl},
\eqref{eq:weyllx}, \eqref{eq:weyllx_dp less zero} and
\eqref{eq:weylgx2} and the bound below: 
\begin{align*}
\begin{split}
    &\la\sum_{4|c< 2m|D|^{\frac12}}T_m(d',d;c) \exp\(-\frac{4\pi m}c |D|^{\frac12}\)\ra\leq\int_{2}^{2m|D|^{\frac12}} \sqrt{t}\,e^{-\frac{4\pi m}t |D|^{\frac12}}\mathrm{d}s(t)\\
    &\leq C_1e^{-2\pi }(2m|D|^{\frac12})^{\frac 23+\delta}\log(2m|D|^{\frac 12})+C_1 e^{-2\pi} \int_{2}^{2m|D|^{\frac12}} t^{\frac 16+\delta}\(\frac{1}{2\sqrt t}+\frac{4\pi m|D|^{\frac 12}}{t^{\frac 32}}\)\log t \mathrm{d}t\\
    &\leq 80C_1e^{-2\pi }(2m|D|^{\frac12})^{\frac 23+\delta}\log(2m|D|^{\frac 12})+C_1e^{-2\pi} \int_{2}^{2m|D|^{\frac12}}\( \frac34t^{-\frac13+\delta}+\frac{48\pi m|D|^{\frac12}}{t^{\frac 43-\delta}}\)\mathrm{d}t\\
    &\leq 986C_1e^{-2\pi} (2m|D|^{\frac12})^{\frac 23+\delta}\log(2m|D|^{\frac 12})\\
    &\leq m^{\frac 76+\delta} |D|^{\frac{13}{12}+\frac\delta 2}\tau(m)\tau(D)\zeta^2(1+\delta )\ell(\delta )|\log \delta |\log(2m|D|^{\frac 12})\cdot\left\{\begin{array}{ll}
         735& d'>0, \\
         172m& d'<0.  
    \end{array}
    \right. 
\end{split}
\end{align*}

\noindent {\bf Acknowledgements.}
The authors thank Scott Ahlgren 
for a lot of delightful discussions and suggestions and thank Nick Andersen for insightful comments on our result. 
The first author was partially supported by
the Alfred P. Sloan Foundation's MPHD Program, awarded in 2017.

\noindent{\bf Competing interests. }
The authors declare that they have no known competing financial interests or personal relationships that could have appeared to influence the work reported in this paper. 

\noindent{\bf Data Availability Statement. }
 Data sharing not applicable to this article as no datasets were generated or analysed during the current study.
	
\bibliographystyle{abbrv}
\bibliography{singular} 

\end{document}